\documentclass[11pt,reqno]{amsart}
\usepackage{amssymb,latexsym, amsmath, amsthm}
\usepackage{color}
\usepackage{graphicx}
\usepackage[mathcal]{euscript}
\usepackage{mathrsfs}
\usepackage{enumerate}
\usepackage{hyperref}

\newtheorem{theorem}{Theorem}[section]
\newtheorem{lemma}[theorem]{Lemma}
\newtheorem{proposition}[theorem]{Proposition}

\theoremstyle{definition}

\numberwithin{equation}{section}

\allowdisplaybreaks
\begin{document}

\title[Hankel operators on the Fock-Sobolev spaces]{Hankel operators on the Fock-Sobolev spaces}

\author[\textbf{Anuradha Gupta} and \textbf{Bhawna Gupta}]
{\textbf{Anuradha gupta} \and \textbf{Bhawna gupta}\\}

\address{Anuradha gupta, 
Delhi college of arts and commerce,
Department of mathematics,
University of Delhi,
Delhi,
India.}
\email{dishna2@yahoo.in}
\address{Bhawna Gupta,
Department of mathematics,
University of Delhi,
Delhi,
India.}
\email{swastik.bhawna26@gmail.com}

\begin{abstract} In this paper, we study operator-theoretic properties (boundedness and compactness) of Hankel operators on the Fock-sobolev spaces $ \mathscr{F}^{p,m} $ in terms of $ \mathcal{BMO}_r^p $ and $ \mathcal{VMO}_r^p $ spaces, respectively, for a non-negative integers $ m $, $ 1 \leq p < \infty $ and $ r > 0 $. Along the way, we also study Berezin transform of Hankel operators on $ \mathscr{F}^{p,m} $. The results in this article are analogous to Zhu's characterization and Per\"al\"a's characterization of bounded and compact Hankel operators on the Bergman spaces of unit disc and the weighted Fock spaces, respectively.
\end{abstract}

\subjclass[2010]{Primary 47B35; Secondary 30H20, 30H35.}

\keywords{Fock-Sobolev spaces, Hankel operators, Berezin transform,  $\mathcal{BMO}_r^p$ spaces, $\mathcal{VMO}_r^p$ spaces.}

\maketitle

\section{Introduction}

 The investigation of Hankel operators on several spaces like Hardy spaces, Bergman spaces, Bergman spaces on a cartain domains, Fock spaces, Fock-type spaces etc., has a long history in mathematics and mathematical physics. We refer \cite{georghankel, karelhankel, zhubmo, zhuoperator, zhuanalysis} for the study on these spaces. Zhu \cite{zhubmo} obtained a characterization of bounded and compact Hankel operators on the Bergman space by defining the spaces of bounded mean oscillation and vanishing mean oscillation with respect to the Bergman metric and analogous results were obtained by Perala, Schuster and Virtanen \cite{peralahankel} on the weighted Fock spaces. After that Cho and Zhu \cite{chofock1, chofock} gave a very useful Fourier characterization of a new class of spaces known as Fock-Sobolev spaces on $ \mathbb{C}^n $ ($ n \geq 1 $). Our aim is to study the basic properties of Hankel operators on the Fock-Sobolev spaces. In particular, we examine the boundedness and compactness of these operators in terms of $ \mathcal{BMO}_r^p $ and $ \mathcal{VMO}_r^p $ spaces for the generating symbols.

 Let $ dA $ be the Lebesgue area measure on $ \mathbb{C} $. For $ 1 \leq p \leq \infty $, let $ \mathscr{F}^p $ be the space of all entire functions $ f $ on the complex plane $ \mathbb{C} $ such that $ g(v)  e^{- \frac{1}{2} |v|^2} \in L^p (\mathbb{C}, dA(v)) $ with norm  
\begin{equation*}
\| g \|_p = \left \{ \frac{p}{2\pi} \int_{\mathbb{C}} |g(v)|^p e^{- \frac{p}{2} |v|^2} dA(v)  \right\}^{\frac{1}{p}},
\end{equation*} for $ 1 \leq p < \infty $ and  
\begin{equation*} 
\| g \|_\infty = \underset{v \in \mathbb{C}}{\text{ ess sup }} \{ |g(v)| e^{- \frac{1}{2} |v|^2} \},
\end{equation*} for $  p = \infty. $

Throughout the paper, $ m $ is a fixed non-negative integer. The Fock-Sobolev space $ \mathscr{F}^{p,m} $ is the space of all entire functions $ g $ on $ \mathbb{C} $ such that 
\begin{equation*}
 \| g\|_{p,m} = \sum_{0 \leq k \leq m} \| f^{(k)} \|_p < \infty.
\end{equation*}

Cho and Zhu \cite{chofock} proved that $ g \in \mathscr{F}^{p,m} $ if and only if $ z^m g \in \mathscr{F}^p $ and $ \| g \|_{p,m} $ can be taken as 
\begin{equation*} 
\| g \|_{p,m} = \left \{ \omega_{p,m}  \int_{\mathbb{C}} | g(v)|^p |v|^{mp} e^{- \frac{p}{2} |v|^2} dA(v) \right \}^\frac{1}{p} ; 1 \leq p < \infty 
\end{equation*} and  
\begin{equation*} 
\| g \|_\infty = \underset{v \in \mathbb{C}}{\text{  sup }} { |g(v) v^m e^{- \frac{1}{2} |v|^2} |}  ; p = \infty,
\end{equation*}
where  $ \omega_{p,m} = (\frac{p}{2})^{\frac{mp}{2}+1} \frac{1}{\pi \Gamma (\frac{mp}{2}+1)} $.

Denote by $ L^{p,m} $, the space of all Lebesgue measurable functions $ g $ such that $ g(v) |v|^m e^{- \frac{1}{2} |v|^2} \in L^p(\mathbb{C}, dA(v))$ and $ \mathscr{F}^{p,m} $ is  a closed subspace of $ L^{p,m} $.

The space $ \mathscr{F}^{2,m} $, the closed subspace of the Hilbert space $ L^{2,m} $, is a Hilbert space with inner product 
\[ \left \langle f, g \right \rangle_{p,m} = \frac{1}{\pi} \int_\mathbb{C} f(v) \overline{g(v)} |v|^{2m} e^{- |v|^2} dA(v) \text{ for all } f, g \in   \mathscr{F}^{2,m},
\]and  having reproducing kernel 
\[ K^m(v,z)=  K_z^m(v) = \sum_{k=0}^\infty \frac{m!}{(k+m)!}( \bar{z} v)^k = m! \frac{( e^{\bar{z} v} - Q_m(\bar{z} v))}{(\bar{z} v)^m },
\]
where $ Q_m(w) $ is the Taylor polynomial of $ e^w $ of order $ (m-1) $ that is, $ Q_m(w) = \displaystyle \sum_{k = 0 }^{ m-1} w^k $. Let \[ k_z^m(v)  = \frac{K_z^m(v)}{\sqrt{K_z^m(z)}} = \frac{ ( e^{\bar{z}v} - Q_m(\bar{z}v))}{(\bar{z}v)^m} \left \{\frac{m!|z|^{2m}}{(e^{|z|^2} - Q_m (|z|^2))}\right \}^\frac{1}{2}\] denote the normalized reproducing kernel of $ \mathscr{F}^{2,m} $. Also, the sequence $ { b_k(v) }_{k=0}^\infty $ forms an orthonormal basis of $ \mathscr{F}^{2,m} $, where 
\[ 
b_k(v) = \sqrt{\frac{m!}{(k+m)!}} v^k .
\]
Cho and Zhu \cite{chofock} showed that the orthogonal projection $ P^m : L^{2,m} \rightarrow \mathscr{F}^{2,m} $ given by 
\[ P^m g(z)= \left \langle g, K_z^m \right \rangle_{2,m}  = \frac{1}{\pi} \int_\mathbb{C} g(v) \overline{K_z^m(v)} |v|^{2m} e^{- |v|^2} dA(v) 
\] is a bounded projection from $ L^{p,m} $ onto $ \mathscr{F}^{p,m} $ for $ 1 \leq p \leq \infty $.

\section{\bf $\mathcal{BMO}_r^p $ spaces and boundedness of Hankel operators on $ \mathscr{F}^{p,m}$}
Let $ \Omega_m $ denote the space of all Lebesgue measurable functions $ g $ on $ \mathbb{C} $ such that $ g k_v^m  \in \bigcup_{p' \geq 1}L^{p',m} $ for each $ v \in  \mathbb{C} $. Let $ I $ denotes the identity operator on $ L^{p,m} $.

The following result can be found in \cite{chofock1}, following which it is clear that for each $ v \in \mathbb{C} $, the reproducing kernel $ \|K_v^m\|_{p', m} $ is finite for all possible $ p' \geq 1 $.
\begin{lemma}  Suppose $ m $ is a fixed non-negative integers and $Q_m(z) $ is the Taylor polynomial of $ e^z $ of order $ m-1 $(with the convention that $Q_0 = 0$). For any parameter $ p'> 0 $, $ \sigma > 0 $, $c> 0 $ and $d > -mp'-2$, we can find a positive constant $ C_0 $ such that 
\[ \int_{\mathbb{C}} |e^{\bar{z}w} - Q_m{\bar{z}w} |^{p'} e^{-c|w|^2} |w|^{d}dA(w) \leq C_0 |z|^d e^{\frac{{p'}^2}{4c}|z|^2},
\] for all $ |z| \geq \sigma$. Furthermore, this holds for all $z$ if $ d \leq p'm $ as well.
\end{lemma}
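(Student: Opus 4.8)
The plan is to reduce the estimate to two elementary pointwise bounds and one Gaussian computation. Writing $h(z,w)=e^{\bar z w}-Q_m(\bar z w)=(\bar z w)^m\phi(\bar z w)$ with $\phi(\zeta):=\sum_{j\ge 0}\zeta^j/(j+m)!$, the inequality $(j+m)!\ge m!\,j!$ gives $|\phi(\zeta)|\le e^{|\zeta|}/m!$, hence $|h(z,w)|\le \tfrac{1}{m!}|\bar z w|^m e^{|\bar z w|}$ for all $z,w$; and the triangle inequality together with $|Q_m(\bar z w)|\le m|\bar z w|^{m-1}$ on $\{|\bar z w|\ge 1\}$ gives $|h(z,w)|^{p'}\le C_1\bigl(e^{p'\operatorname{Re}(\bar z w)}+|\bar z w|^{(m-1)p'}\bigr)$ there (the last term absent when $m=0$). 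Denote by $I(z)$ the integral in the statement.

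For the bound on $\{|z|\ge\sigma\}$ I would split $I(z)=I_{\mathrm{near}}+I_{\mathrm{far}}$ over $\{|\bar z w|\le 1\}$ and $\{|\bar z w|>1\}$. On the near part the first bound gives $|h|\le \tfrac{e}{m!}|z|^m|w|^m$, so $I_{\mathrm{near}}\le C|z|^{mp'}\int_{\mathbb{C}}|w|^{mp'+d}e^{-c|w|^2}\,dA(w)$, which is finite exactly because $d>-mp'-2$; and for $|z|\ge\sigma$ the factor $|z|^{mp'}$ is $\lesssim |z|^d e^{\frac{p'^2}{4c}|z|^2}$, since $|z|^{mp'-d}e^{-\frac{p'^2}{4c}|z|^2}$ is bounded on $\{|z|\ge\sigma\}$ regardless of the sign of $mp'-d$. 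On the far part the second bound produces a polynomial piece $\lesssim |z|^{(m-1)p'}\int_{|w|>1/|z|}|w|^{(m-1)p'+d}e^{-c|w|^2}\,dA(w)$; since $(m-1)p'+d>-p'-2$, that integral is a finite constant when $(m-1)p'+d>-2$ and is $\lesssim |z|^{-((m-1)p'+d+2)}$ (with an extra $\log|z|$ at the borderline) when $(m-1)p'+d\le -2$, so the polynomial piece is $\lesssim |z|^{(m-1)p'}+|z|^{-d-2}$, and both powers are again $\lesssim |z|^d e^{\frac{p'^2}{4c}|z|^2}$ on $\{|z|\ge\sigma\}$ by the same polynomial-versus-Gaussian comparison.

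The exponential piece $A(z)=\int_{|\bar z w|>1}e^{p'\operatorname{Re}(\bar z w)}e^{-c|w|^2}|w|^d\,dA(w)$ is the crux. Completing the square, $p'\operatorname{Re}(\bar z w)-c|w|^2=-c\,\bigl|w-\tfrac{p'z}{2c}\bigr|^2+\tfrac{p'^2}{4c}|z|^2$, so the substitution $u=w-\tfrac{p'z}{2c}$ turns $A(z)$ into $e^{\frac{p'^2}{4c}|z|^2}\int_{E}e^{-c|u|^2}\bigl|u+\tfrac{p'z}{2c}\bigr|^d\,dA(u)$ over $E=\{u:|u+\tfrac{p'z}{2c}|>1/|z|\}$, where one may take $E=\mathbb{C}$ if $d>-2$. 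It then suffices to show this last integral is $\lesssim |z|^d$ on $\{|z|\ge\sigma\}$, which I would do by splitting $E$ according to $|u+\tfrac{p'z}{2c}|\gtrless \tfrac{p'|z|}{4c}$: on the outer region $|u+\tfrac{p'z}{2c}|^d$ is dominated by $(\tfrac{p'|z|}{4c})^d\lesssim |z|^d$ if $d\le 0$ and by $C(|u|^d+|z|^d)$ if $d\ge 0$, each integrating against $e^{-c|u|^2}$ to $\lesssim |z|^d$; on the inner region $|u|>\tfrac{p'|z|}{4c}$, so $e^{-c|u|^2}\le e^{-p'^2|z|^2/(16c)}$, while $\int_{1/|z|<|t|<p'|z|/(4c)}|t|^d\,dA(t)$ grows at most polynomially (or like $\log|z|$) in $|z|$, and the product with the Gaussian factor is $\lesssim |z|^d$ on $\{|z|\ge\sigma\}$. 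If the constraint $\tfrac{p'|z|}{4c}>1/|z|$ forces $|z|$ above a threshold depending on $p',c$, the finitely many remaining $z$ with $\sigma\le|z|\le(\text{threshold})$ are absorbed by noting that $I(z)$ is finite (again by $d>-mp'-2$) and continuous, hence bounded on that compact annulus, where $|z|^d e^{\frac{p'^2}{4c}|z|^2}$ is bounded below by a positive constant. This yields the bound for $|z|\ge\sigma$.

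For the final ("furthermore") assertion I would combine the case $\sigma=1$ with the range $|z|\le 1$. On $\{|z|\le 1\}$, the factorization $h(z,w)=(\bar z w)^m\phi(\bar z w)$ with $|\phi(\zeta)|\le e^{|\zeta|}/m!$ gives $I(z)\le \tfrac{|z|^{mp'}}{(m!)^{p'}}\int_{\mathbb{C}}|w|^{mp'+d}e^{p'|w|-c|w|^2}\,dA(w)=C|z|^{mp'}$, the integral being finite since $mp'+d>-2$; and because $mp'-d\ge 0$ and $|z|\le 1$ we have $|z|^{mp'}\le|z|^d\le|z|^d e^{\frac{p'^2}{4c}|z|^2}$, as required. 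I expect the genuinely delicate point to be the estimate of $A(z)$ when $d$ is very negative—down to just above $-mp'-2$, hence possibly well below $-2$: there $|w|^d$ is not even locally integrable, and one must exploit both the cut-off $|w|>1/|z|$ and the Gaussian localization of $e^{-c|u|^2}$ away from the peak $w=\tfrac{p'z}{2c}$, all while keeping the constant uniform over $\{|z|\ge\sigma\}$ (it necessarily depends on $\sigma$).
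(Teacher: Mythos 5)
The paper does not actually prove this lemma: it is quoted verbatim from Cho and Zhu \cite{chofock1}, so there is no in-paper argument to compare yours against, and your proposal has to stand on its own. It does. The factorization $e^{\bar zw}-Q_m(\bar zw)=(\bar zw)^m\phi(\bar zw)$ with $|\phi(\zeta)|\le e^{|\zeta|}/m!$, the near/far split at $|\bar zw|=1$, and the completed square $p'\operatorname{Re}(\bar zw)-c|w|^2=-c\,|w-\tfrac{p'z}{2c}|^2+\tfrac{p'^2}{4c}|z|^2$ are exactly the right ingredients; the near part produces the integrability condition $mp'+d>-2$, i.e.\ $d>-mp'-2$, and the restriction $|z|\ge\sigma$ (or $d\le p'm$) enters precisely where you say it does, in comparing stray powers $|z|^{mp'}$, $|z|^{(m-1)p'}$, $|z|^{-d-2}$ against $|z|^d e^{\frac{p'^2}{4c}|z|^2}$. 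You also correctly isolate the one genuinely delicate point, the failure of local integrability of $|w|^d$ when $d\le-2$, and your treatment of it (the cut-off $|w|>1/|z|$ inherited from $|\bar zw|>1$, plus the Gaussian localization of $e^{-c|u|^2}$ away from the peak $w=\tfrac{p'z}{2c}$) is sound. Two cosmetic remarks only: the set of $z$ with $\sigma\le|z|\le\text{threshold}$ is a compact annulus rather than ``finitely many'' points, though your continuity-plus-compactness absorption is exactly what is needed there; and with the standard normalization $Q_m(\zeta)=\sum_{k=0}^{m-1}\zeta^k/k!$ the elementary bound for $|\zeta|\ge1$ is $|Q_m(\zeta)|\le e|\zeta|^{m-1}$ rather than $m|\zeta|^{m-1}$, which changes nothing.
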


Therefore, it follows that if $ g \in \Omega_m $, then the Hankel operator $ H_g^p : \mathscr{F}^{p,m} \rightarrow L^{p,m} $  with symbol $ g $, defined by $ H_g^p f = (I - P^m) gf $ for all $ f \in \mathscr{F}^{p,m} $, is densely defined on $ \mathscr{F}^{p,m} $, since the set of linear span of all kernel functions $ \{ k_v^m: v\in \mathbb{C} \} $ is dense in the space $ \mathscr{F}^{p,m} $. 
 By using the definition of $ P^m $, 
 we write 
 \begin{equation}\label{seq}
  H_g^p f(z) = \frac{1}{\pi} \int_\mathbb{C} (g(z) - g(v) ) f(v) \overline{K_z^m(v)} |v|^{2m} e^{- |v|^2} dA(v). 
\end{equation}
 Henceforth, for the convergence of integral in \eqref{seq}, we will assume that the symbol $ g  $ is in $ \Omega_m $.

For some $ z \in \mathbb{C} $, $ 1 \leq p < \infty $ and $ 0 < r < \infty $, let $ \mathscr{B}(z; r) = \{ v \in \mathbb{C}: | v - z | \leq r \} $ be the Euclidean disk centred at $ z $ and of radius $r$. Denote by $ L_{Loc}^p $, the space of all Lebesgue measurable functions $ g $ on $ \mathbb{C} $ such that $ g(v) \in L^p(K, dA(v)) $ for each compact subset $ K $ of $ \mathbb{C} $. Let $\mathcal{BA}_r $ be the set of all $ L_{Loc}^1$ integrable functions $ g $ on $ \mathbb{C} $ such that $ \tilde{g}_r $ defined by 
\[ \tilde{g}_r (z) = \frac{1}{ \pi r^2} \int_{\mathscr{B}(z; r)} g(v) dA(v) \] is bounded on $ \mathbb{C}$. For finite $ p \geq 1$ and $ g \in L_{Loc}^p $, denote  
\[ \tilde{g}_r ^p (z) = \frac{1}{ \pi r^2} \int_{\mathscr{B}(z; r)} |g(v)|^p  dA(v).\]
Let $ \mathcal{BA}_r^p $ be the set of all $ L_{Loc}^p $  integrable functions $ g $ on $ \mathbb{C} $ such that $\tilde{g}_r ^p $ is bounded on $ \mathbb{C}$.   
Let $ \mathcal{BMO}_r^p $ denote the set of all $ L_{Loc}^p $  integrable functions $ g $ such that 
\[ \| g \|_{\mathcal{BMO}_r^p} = \underset{z \in \mathbb{C}}{Sup} \left \{ \frac{1}{ \pi r^2} \int_{\mathscr{B}(z; r)} |g(v) - \tilde{g}_r  (z)|^p  dA(v)\right \}^{\frac{1}{p}} \] is finite.
Let $ \mathcal{BO}_r $ be the set of all continuous functions $ g $ on $ \mathbb{C} $ such that 
\[ \| g \|_{\mathcal{BO}_r} = \underset{z \in \mathbb{C}}{Sup} \left \{ \underset{v \in \mathscr{B}(z; r)}{Sup} | g(v) - g(z) |\right \} < \infty.
\]

The following lemmas will be instrumental in further study of Hankel operators on $ \mathscr{F}^{p,m} $.

\begin{lemma}  \cite{peralahankel} \label{lem1} Let $ p \geq 1 $. Then the following conditions hold:
\begin{enumerate}
\item Let $ g \in L_{Loc}^p $ then $ g \in \mathcal{BMO}_r^p $ if and only if there is a constant $ C > 0 $ such that for every $ z \in \mathbb{C} $ there exists a constant $ \mu_z $ such that 
\begin{equation*}
 \int_{\mathscr{B}(z; r)} |g(v) - \mu_z |^p  dA(v) \leq C.
\end{equation*} 
\item For $ 0 < r < R $, $ \mathcal{BMO}_R^p \subset \mathcal{BMO}_r^p $.
\item $ \mathcal{BO}_r $ is independent of $r$. Moreover, for any continous function $ g $ on $ \mathbb{C} $, $ g \in \mathcal{BO} $ if and only if there exists a constant $ C_0 > 0 $ such that 
 \begin{equation*}
 | g(z) - g(v) | \leq C_0(|z-v| +1)
\end{equation*}  for all $ z, v \in \mathbb{C}$. 
\item If $ g \in \mathcal{BMO}_{2r}^p $, then $ \tilde{g}_r \in \mathcal{BO}_r $.
\item If $ g \in \mathcal{BMO}_{2r}^p $, then $ g - \tilde{g}_r \in \mathcal{BA}_r^p $.
\item $ \mathcal{BMO}_r^p \subset \mathcal{BO}_r + \mathcal{BA}_r^p$ for $ 0 < r < \infty $.
\end{enumerate}
\end{lemma}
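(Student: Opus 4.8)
The plan is to prove the six assertions in the order listed, since each step feeds the next, with (1) serving as the basic engine.

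\emph{Assertion (1).} One direction is immediate: take $\mu_z=\tilde g_r(z)$ and the required inequality holds with $C=\pi r^2\|g\|_{\mathcal{BMO}_r^p}^p$. For the converse, suppose $\int_{\mathscr B(z;r)}|g(v)-\mu_z|^p\,dA(v)\le C$ for all $z$. Since the average over $\mathscr B(z;r)$ of the constant $\mu_z$ is $\mu_z$ itself, $|\mu_z-\tilde g_r(z)|\le\frac1{\pi r^2}\int_{\mathscr B(z;r)}|g(v)-\mu_z|\,dA(v)$, and H\"older's inequality bounds this by $(\pi r^2)^{-1/p}C^{1/p}$. Writing $g(v)-\tilde g_r(z)=(g(v)-\mu_z)+(\mu_z-\tilde g_r(z))$ and applying the triangle inequality in $L^p(\mathscr B(z;r))$ then yields $\|g\|_{\mathcal{BMO}_r^p}\le 2\bigl(C/(\pi r^2)\bigr)^{1/p}$. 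Assertion (2) is now a one-line consequence: for $g\in\mathcal{BMO}_R^p$ choose $\mu_z=\tilde g_R(z)$ and restrict the integral from $\mathscr B(z;R)$ to the smaller disk $\mathscr B(z;r)$, then invoke (1).

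\emph{Assertion (3).} This is a chaining argument. If $r<R$ and $v\in\mathscr B(z;R)$, join $z$ to $v$ by at most $N=\lceil R/r\rceil$ points with consecutive distances at most $r$; then $|g(v)-g(z)|\le N\|g\|_{\mathcal{BO}_r}$, so $\mathcal{BO}_r=\mathcal{BO}_R$ with comparable norms, i.e. $\mathcal{BO}_r$ is independent of $r$. Applying the same chaining with $r=1$ (so $N=\lceil|z-v|\rceil\le|z-v|+1$) gives one implication in the displayed characterization, and evaluating the hypothesis at $|z-v|\le 1$ gives the other, with $C_0$ and $\|g\|_{\mathcal{BO}}$ comparable.

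\emph{Assertions (4)--(6).} For (4) observe that when $v\in\mathscr B(z;r)$ both $\mathscr B(v;r)$ and $\mathscr B(z;r)$ lie inside $\mathscr B(z;2r)$. Writing $\tilde g_r(v)-\tilde g_r(z)=\frac1{\pi r^2}\bigl(\int_{\mathscr B(v;r)}(g-c)\,dA-\int_{\mathscr B(z;r)}(g-c)\,dA\bigr)$ with $c=\tilde g_{2r}(z)$ (the constant cancels since the two disks have equal area), dominating both integrals by $\int_{\mathscr B(z;2r)}|g-\tilde g_{2r}(z)|\,dA$, and using H\"older together with the definition of $\mathcal{BMO}_{2r}^p$, one gets $|\tilde g_r(v)-\tilde g_r(z)|\le 8\|g\|_{\mathcal{BMO}_{2r}^p}$; continuity of $\tilde g_r$ for $g\in L^1_{Loc}$ is standard, so $\tilde g_r\in\mathcal{BO}_r$. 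For (5), on $\mathscr B(z;r)$ split $g-\tilde g_r=(g-\tilde g_r(z))+(\tilde g_r(z)-\tilde g_r)$: the first term has $L^p$-average over $\mathscr B(z;r)$ controlled by $\|g\|_{\mathcal{BMO}_r^p}$, finite by (2), and the second is bounded pointwise by $\|\tilde g_r\|_{\mathcal{BO}_r}$, finite by (4); so $g-\tilde g_r\in\mathcal{BA}_r^p$. Finally (6): given $g\in\mathcal{BMO}_r^p$, apply (4) and (5) at the half-scale $s=r/2$ (legitimate since $\mathcal{BMO}_{2s}^p=\mathcal{BMO}_r^p$) to obtain $\tilde g_s\in\mathcal{BO}_s=\mathcal{BO}_r$ by (3) and $g-\tilde g_s\in\mathcal{BA}_s^p$; since each disk of radius $r$ is covered by a number of disks of radius $s$ depending only on $r/s$, one has $\mathcal{BA}_s^p\subset\mathcal{BA}_r^p$, and therefore $g=\tilde g_s+(g-\tilde g_s)\in\mathcal{BO}_r+\mathcal{BA}_r^p$.

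Everything here is elementary real analysis; the only points requiring care are keeping the dependence on $r$ explicit in the H\"older estimates of (1), (4) and (5), and the scale matching in (6), where one must verify that halving the averaging radius is compatible with the hypotheses of (4)--(5) and that the resulting $\mathcal{BA}$-class at scale $r/2$ embeds in the one at scale $r$. I do not expect any of these to be a serious obstacle.
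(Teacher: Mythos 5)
Your proof is correct. The paper itself states this lemma without proof, simply citing \cite{peralahankel}, and your argument follows essentially the same standard route as that source: the equivalent characterization with arbitrary constants $\mu_z$ plus H\"older for (1)--(2), chaining for (3), the cancellation trick with $c=\tilde g_{2r}(z)$ over the enlarged disk for (4)--(5), and the half-scale decomposition $g=\tilde g_{r/2}+(g-\tilde g_{r/2})$ together with the covering inclusion $\mathcal{BA}_{r/2}^p\subset\mathcal{BA}_r^p$ for (6).
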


\begin{lemma} \label{lem2} \cite{chofock1}  Suppose $ t \in \mathbb{R} $ and $ M > 0 $ be a fixed real number.
\begin{enumerate} \item Then there exists a constant $ C_0 > 0 $  such that 
\begin{equation*} 
\sum_{k=0}^\infty \left(\frac{y}{k+1}\right)^t \frac{y^k}{k!} \leq C_0 e^y
\end{equation*} for all real $ y \geq M $. Futhermore, this holds for all $ y \geq 0 $ if $ t \geq 0 $.
\item Then there exists a constant $ C_0 > 0 $  such that 
\begin{equation*} 
\sum_{k=0}^\infty \left(\frac{y}{k+1}\right)^t \frac{y^k}{k!} \geq C_0 e^y
\end{equation*} for all real $ y \geq M $. Futhermore, this holds for all $ y \geq 0 $ if $ t \leq 0 $.
\end{enumerate}
\end{lemma}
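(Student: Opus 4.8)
The plan is to compare the series with $e^{y}=\sum_{k\ge 0}y^{k}/k!$, exploiting that the weight $(y/(k+1))^{t}$ is comparable to a constant on the band of indices $k\approx y$ that carries essentially all the mass of the sequence $\{y^{k}/k!\}$. Write $T_t(y)=\sum_{k\ge 0}\bigl(y/(k+1)\bigr)^{t}\,y^{k}/k!$; the ratio of consecutive terms is $\bigl((k+1)/(k+2)\bigr)^{t}\cdot y/(k+1)\to 0$, so $T_t(y)$ is a finite positive number for every $y>0$ and every $t\in\mathbb{R}$.

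First I would reduce part (2) to part (1). Since
\[
\frac{y^{k}}{k!}=\sqrt{\Bigl(\frac{y}{k+1}\Bigr)^{t}\frac{y^{k}}{k!}}\;\cdot\;\sqrt{\Bigl(\frac{y}{k+1}\Bigr)^{-t}\frac{y^{k}}{k!}},
\]
the Cauchy--Schwarz inequality gives $e^{2y}=\bigl(\sum_{k}y^{k}/k!\bigr)^{2}\le T_t(y)\,T_{-t}(y)$, hence $T_t(y)\ge e^{2y}/T_{-t}(y)$. Thus the lower estimate of part (2) for the symbol $t$ is an immediate consequence of the upper estimate of part (1) for the symbol $-t$, and the range of validity transfers correctly: it holds for $y\ge M$ in general, and for all $y>0$ when $t\le 0$ (because then $-t\ge 0$, the case in which part (1) is claimed for all $y$). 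So it remains to prove part (1).

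For part (1) the case $t=0$ is trivial, so assume $t\ne 0$ and put $n=\lceil |t|\rceil$, a positive integer. Using the elementary pointwise bound $x^{t}\le x^{n}+x^{-n}$ for $x>0$ (valid since $|t|\le n$), I split
\[
T_t(y)\le S_1(y)+S_2(y),\qquad
S_1(y)=\sum_{k\ge 0}\Bigl(\frac{y}{k+1}\Bigr)^{n}\frac{y^{k}}{k!},\quad
S_2(y)=\sum_{k\ge 0}\Bigl(\frac{k+1}{y}\Bigr)^{n}\frac{y^{k}}{k!}.
\]
For $S_1$, from $(k+n)!=k!\prod_{i=1}^{n}(k+i)\le k!\,(k+n)^{n}\le k!\,n^{n}(k+1)^{n}$ one gets $(k+1)^{n}k!\ge (k+n)!/n^{n}$, whence $S_1(y)\le n^{n}\sum_{k\ge 0}y^{k+n}/(k+n)!\le n^{n}e^{y}$ for all $y>0$. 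When $t\ge 0$ one has the sharper bound $x^{t}\le 1+x^{n}$, so in that case already $T_t(y)\le (1+n^{n})e^{y}$ for \emph{all} $y>0$, which is the supplementary clause of part (1). For $S_2$, I would invoke the classical exponential-polynomial identity $\sum_{k\ge 0}(k+1)^{n}\,y^{k}/k!=p_n(y)\,e^{y}$ with $p_n$ a polynomial of degree $n$ (expand $(k+1)^{n}$ in falling factorials $(k+1)^{\underline{i}}$ and use $\sum_{k}(k+1)^{\underline{i}}\,y^{k}/k!=y^{i-1}(y+i)e^{y}$); then $S_2(y)=y^{-n}p_n(y)\,e^{y}$, and since $p_n(y)/y^{n}$ is bounded on $[M,\infty)$ there is $C_M$ with $S_2(y)\le C_M e^{y}$ for $y\ge M$. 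Combining, $T_t(y)\le (n^{n}+C_M)e^{y}$ for $y\ge M$, which finishes part (1) and hence, through the reduction above, the whole lemma.

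The only step requiring genuine work is the bound on $S_2$: there one must use that the mass $y^{k}/k!$ decays fast enough---most delicately in the overshoot region $k\gg y$---to absorb the polynomially growing factor $(k+1)^{n}/y^{n}$. If one prefers to avoid the polynomial identity, $S_2$ can instead be split at $k\le 2y$ (where $(k+1)^{n}/y^{n}\le 3^{n}$ once $y\ge 1$) and $k>2y$ (where, for $k$ beyond a multiple of $y$ depending only on $n$, the terms of $\sum_{k}(k+1)^{n}y^{k}/k!$ decay geometrically, so their total is at most $Cy^{n}\le Cy^{n}e^{y}$); I would nonetheless use the identity, since it is shorter and yields the sharp leading constant.
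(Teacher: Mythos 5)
The paper itself offers no argument for this lemma; it is quoted verbatim from Cho--Zhu \cite{chofock1}, so there is no in-paper proof to match against. Your proposal is a correct, self-contained proof, and I verified its three ingredients: the Cauchy--Schwarz duality $e^{2y}\le T_t(y)T_{-t}(y)$ (with $a_kb_k=y^k/k!$) does reduce the lower bound for $t$ to the upper bound for $-t$, and the ranges of validity transfer exactly as the statement requires; the bound $(k+n)!\le n^n(k+1)^n k!$ gives $S_1(y)\le n^n e^y$ for all $y\ge0$ (and the $1+x^n$ variant settles the supplementary clause for $t\ge0$); and the identity $\sum_k(k+1)^{\underline{i}}y^k/k!=y^{i-1}(y+i)e^y$ is correct, so $S_2(y)=y^{-n}p_n(y)e^y$ with $\deg p_n=n$ and leading coefficient $1$, which is $O(e^y)$ on $[M,\infty)$. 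Your route differs from the one in the cited source, which obtains both bounds directly by exploiting the concentration of the sequence $y^k/k!$ near $k\approx y$ (splitting the index range and controlling the tails, essentially a Stirling-type localization); your duality trick halves the work by making the lower bound a formal consequence of the upper one, at the cost of saying nothing at the single degenerate point $y=0$ when $t\ne0$ --- but there the statement is itself vacuous ($T_t(0)=0$ for $t>0$ and $T_t(0)=+\infty$ for $t<0$), so nothing is lost. The only cosmetic remark is that your final sentence's alternative treatment of $S_2$ by splitting at $k\le 2y$ is indeed the more standard argument and is what the reference effectively does; either version is acceptable.
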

For any two points $ u $ and $v$ such that $ u $ and $ v $ do not lie on the same ray emanating from the origin, the lattice generated by $ u $ and $ v $ is the set $ \{ au+bv| a,b \in \mathbb{Z} \}$. 

\begin{lemma} \label{lem3} \cite{wangtoeplitz}
Suppose $ \lambda $ is a locally integrable positive measure, $ p >0 $, $ r > 0 $,  $ m $ is a non- negative integer and $ \{ b_n \} $ is the lattice in $ \mathbb{C} $ generated by $ r $ and $ r i$. Then the following conditions are equivalent.
\begin{enumerate}
\item There exists a constant $ C_0 $ such that 
\begin{equation*}
\int_{\mathbb{C}} |g(v) v^m e^{- \frac{|v|^2}{2}}|^p d \lambda \leq C_0 \| g \|_{p,m}^p 
\end{equation*} for all entire functions $ g $.
\item There exists a constant $ C_0 > 0$ such that $ \lambda(\mathfrak{B}(z;r)) < C_0 $ for all $ z \in \mathbb{C} $.
\item There exists a constant $ C_0 > 0 $ such that $ \lambda(\mathfrak{B}(b_n ;r)) < C_0 $ for all positive integers $ n $.
\end{enumerate}
\end{lemma}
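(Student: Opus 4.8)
The plan is to establish the cyclic implications $(1)\Rightarrow(2)\Rightarrow(3)\Rightarrow(1)$; here $(2)\Rightarrow(3)$ is immediate since the lattice $\{b_n\}$ sits inside $\mathbb{C}$. I would begin with a harmless reduction: writing $h(v)=v^m g(v)$ (which is entire whenever $g$ is), one has the pointwise identity $|g(v)v^m e^{-|v|^2/2}|^p=|h(v)e^{-|v|^2/2}|^p$, and, by the Cho--Zhu reformulation of the norm recalled above, $\|g\|_{p,m}^p=\omega_{p,m}\int_{\mathbb{C}}|h(v)e^{-|v|^2/2}|^p\,dA(v)$. Thus (1) is precisely the assertion that $d\lambda$ is a Carleson-type measure for the ordinary Fock weight applied to $h$, and all the work can be phrased in terms of $h$. (Throughout, $\mathscr{B}(z;r)$ denotes the closed Euclidean disc, the set written $\mathfrak{B}(z;r)$ in the statement.)

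For $(1)\Rightarrow(2)$ I would test (1) on the normalized reproducing kernels $g=k_z^m$ with $|z|$ large. Using the explicit formula for $k_z^m$, one gets $\|k_z^m\|_{p,m}^p=\omega_{p,m}(m!)^{p/2}\bigl(e^{|z|^2}-Q_m(|z|^2)\bigr)^{-p/2}\int_{\mathbb{C}}|e^{\bar z w}-Q_m(\bar z w)|^p e^{-\frac p2|w|^2}\,dA(w)$; the integral estimate recalled at the start of this section (taken with $c=\tfrac p2$, $d=0$) together with the elementary bound $(e^{|z|^2}-Q_m(|z|^2))^{p/2}\ge c_0 e^{\frac p2|z|^2}$ for $|z|\ge\sigma$ then yields $\sup_{|z|\ge\sigma}\|k_z^m\|_{p,m}<\infty$. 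For the lower bound, if $v\in\mathscr{B}(z;r)$ then $\operatorname{Re}(\bar z v)=|z|^2+\operatorname{Re}(\bar z(v-z))\ge|z|^2-r|z|$, so $Q_m(\bar z v)e^{-\bar z v}\to0$ uniformly as $|z|\to\infty$; hence $|e^{\bar z v}-Q_m(\bar z v)|^p\gtrsim e^{p\operatorname{Re}(\bar z v)}$, which gives the uniform lower estimate $|k_z^m(v)v^m e^{-|v|^2/2}|^p\gtrsim e^{-\frac p2|v-z|^2}\ge e^{-\frac p2 r^2}$ on $\mathscr{B}(z;r)$. Feeding both facts into (1) gives $\lambda(\mathscr{B}(z;r))\le C$ for $|z|\ge\sigma$, and for $|z|<\sigma$ one uses $\lambda(\mathscr{B}(z;r))\le\lambda(\mathscr{B}(0;\sigma+r))<\infty$ by local finiteness of $\lambda$, which is (2).

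The heart of the proof is $(3)\Rightarrow(1)$, and the engine is the sub-mean-value inequality for the Fock weight: for every entire $F$ and every $\delta>0$,
\[ |F(v)e^{-|v|^2/2}|^p\ \le\ \frac{e^{p\delta^2/2}}{\pi\delta^2}\int_{\mathscr{B}(v;\delta)}|F(w)e^{-|w|^2/2}|^p\,dA(w), \]
with the constant independent of $v$ and $F$. I would prove this by observing that $|F(w)e^{-\bar v w}|^p$ is subharmonic in $w$ (true for all $p>0$ when $F$ is holomorphic), applying the area form of the sub-mean-value property over $\mathscr{B}(v;\delta)$, and using the identity $\tfrac p2|w|^2-p\operatorname{Re}(\bar v w)=\tfrac p2(|w-v|^2-|v|^2)$ to redistribute the Gaussian factors and cancel $e^{-\frac p2|v|^2}$. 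Applying this with $F=h$ and $\delta=r$, and using $\mathscr{B}(v;r)\subset\mathscr{B}(b_n;2r)$ whenever $v\in\mathscr{B}(b_n;r)$, we see $|h(v)e^{-|v|^2/2}|^p$ is bounded on $\mathscr{B}(b_n;r)$ by the constant $M_n:=\tfrac{e^{pr^2/2}}{\pi r^2}\int_{\mathscr{B}(b_n;2r)}|h(w)e^{-|w|^2/2}|^p\,dA(w)$. Since $\{b_n\}$ is the square lattice of mesh $r$, the discs $\{\mathscr{B}(b_n;r)\}$ cover $\mathbb{C}$ (every point lies within distance $r/\sqrt2<r$ of the lattice), while the dilated discs $\{\mathscr{B}(b_n;2r)\}$ have uniformly bounded multiplicity $N$ by a crude packing count. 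Hence, with $C_0$ the constant from $(3)$,
\[ \int_{\mathbb{C}}|g(v)v^m e^{-|v|^2/2}|^p\,d\lambda\ \le\ \sum_n M_n\,\lambda(\mathscr{B}(b_n;r))\ \le\ C_0\sum_n M_n\ \le\ \frac{NC_0 e^{pr^2/2}}{\pi r^2}\int_{\mathbb{C}}|h(w)e^{-|w|^2/2}|^p\,dA(w), \]
and the right-hand side equals $\tfrac{NC_0 e^{pr^2/2}}{\pi r^2\,\omega_{p,m}}\|g\|_{p,m}^p$, which is (1).

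The genuinely routine ingredients — the two kernel asymptotics, the subharmonicity estimate, and the lattice packing count — are all standard. The step needing the most care is $(3)\Rightarrow(1)$: one has to produce the sub-mean-value inequality in exactly the weighted form above, and then keep the lattice bookkeeping straight, using hypothesis (3) and the \emph{covering} property for the radius-$r$ discs while invoking \emph{bounded overlap} only for the enlarged radius-$2r$ discs that emerge from the sub-mean-value step.
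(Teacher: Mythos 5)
The paper does not prove this lemma at all --- it is imported verbatim from Wang--Cao--Xia \cite{wangtoeplitz} --- so there is no in-paper argument to compare against; your proof is correct and follows the same standard route as the cited source: testing $(1)$ on the normalized kernels $k_z^m$ (with the kernel asymptotics of Lemma 2.1 and the lower bound $|k_z^m(v)v^m e^{-|v|^2/2}|\gtrsim e^{-\frac12|v-z|^2}$ on $\mathscr{B}(z;r)$) to get $(2)$, and the weighted sub-mean-value inequality plus the cover/bounded-overlap bookkeeping for the square lattice to get $(3)\Rightarrow(1)$. All the individual estimates you invoke check out, including the uniform bound $\sup_{|z|\ge\sigma}\|k_z^m\|_{p,m}<\infty$ and the identity $\tfrac p2|w|^2-p\operatorname{Re}(\bar v w)=\tfrac p2(|w-v|^2-|v|^2)$ driving the subharmonicity step.
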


The Berezin transform of a function $ g $ is given by 
\begin{align*}
\mathfrak{B_m} (g)(z) &= \left \langle g k_z^m, k_z^m \right \rangle_{2, m} = \frac{1}{\pi m!} \int_{\mathbb{C}} g(v) |k_z^m(v) |^2 |v|^{2m} e^{-|v|^2} dA(v)\\
& = \frac{1}{\pi (e^{|z|^2} - Q_m (|z|^2)} \int_{\mathbb{C}} g(v) | e^{\bar{z}v} - Q_m(\bar{z}v)|^2 e^{-|v|^2} dA(v),
\end{align*}where $ k_z^m $ denotes the normalized reproducing kernel of $ \mathscr{F}^{2,m} $.

\begin{proposition}
\label{lem4} 
Let $ g \in \Omega_m $. For $ 1 \leq p \leq \infty $, the following conditions are equivalent:
\begin{enumerate}
\item $ g \in \mathcal{BA}_r^p$;
\item There exists a positive constant $ C $ such that \begin{equation*}
\frac{1}{\pi (e^{|z|^2} - Q_m (|z|^2)} \int_{\mathbb{C}}| g(v)|^p | e^{\bar{z}v} - Q_m(\bar{z}v)|^2 e^{-|v|^2} dA(v) \leq C
\end{equation*} for all $ z \in \mathbb{C}$;
\item  The multiplication operator $ L_g: \mathscr{F}^{p,m} \rightarrow L^{p,m} $ is bounded.  
\end{enumerate}
\end{proposition}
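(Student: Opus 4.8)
The plan is to establish the two equivalences $(1)\Leftrightarrow(3)$ and $(1)\Leftrightarrow(2)$, which together give the statement; I treat $1\le p<\infty$ throughout, the case $p=\infty$ requiring only notational adjustments. For $(1)\Leftrightarrow(3)$ I would apply Lemma~\ref{lem3} to the positive measure $d\lambda(v)=|g(v)|^p\,dA(v)$, which is locally finite precisely because $g\in L_{Loc}^p$. Since $\lambda(\mathscr{B}(z;r))=\pi r^2\,\tilde g_r^p(z)$, membership $g\in\mathcal{BA}_r^p$ is nothing but condition (2) of that lemma. On the other hand, for every entire $h$,
\[
\|L_g h\|_{p,m}^p=\omega_{p,m}\int_{\mathbb{C}}|g(v)h(v)|^p|v|^{mp}e^{-\frac{p}{2}|v|^2}\,dA(v)=\omega_{p,m}\int_{\mathbb{C}}\big|h(v)\,v^m e^{-\frac12|v|^2}\big|^p\,d\lambda(v),
\]
so, $\mathscr{F}^{p,m}$ being a space of entire functions (and the Carleson-type inequality in condition (1) of Lemma~\ref{lem3} being automatic for entire $h\notin\mathscr{F}^{p,m}$), boundedness of $L_g$ is exactly condition (1) of that lemma. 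The equivalence (1)$\Leftrightarrow$(2) of Lemma~\ref{lem3} then yields $(1)\Leftrightarrow(3)$.

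For $(1)\Leftrightarrow(2)$, note first that by the formula for the Berezin transform recorded above, condition $(2)$ says precisely that $\mathfrak{B}_m(|g|^p)$ is bounded on $\mathbb{C}$; it is convenient to put $\kappa(z,v)=|e^{\bar z v}-Q_m(\bar z v)|^2 e^{-|v|^2}/(e^{|z|^2}-Q_m(|z|^2))$, which up to a positive constant equals $|k_z^m(v)|^2|v|^{2m}e^{-|v|^2}$. To obtain $(2)\Rightarrow(1)$: since $e^{-|z|^2}Q_m(|z|^2)\to 0$, we have $e^{|z|^2}-Q_m(|z|^2)\asymp e^{|z|^2}$ for $|z|$ large, and for such $z$ and $v\in\mathscr{B}(z;r)$ the quantity $\operatorname{Re}(\bar z v)\ge|z|^2-r|z|$ dwarfs $\log|Q_m(\bar z v)|$, so $|e^{\bar z v}-Q_m(\bar z v)|\ge\tfrac12|e^{\bar z v}|$ there; combining this with the identity $2\operatorname{Re}(\bar z v)-|v|^2=|z|^2-|v-z|^2$ shows $\kappa(z,v)\gtrsim e^{-|v-z|^2}\gtrsim1$ on $\mathscr{B}(z;r)$, whence $\tilde g_r^p(z)\lesssim\mathfrak{B}_m(|g|^p)(z)$ for $|z|$ large, while for $z$ in a fixed disk $\tilde g_r^p(z)<\infty$ by local integrability of $|g|^p$. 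Hence $g\in\mathcal{BA}_r^p$.

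The converse $(1)\Rightarrow(2)$ is where the real work lies. I would first prove the uniform pointwise estimate
\[
\kappa(z,v)\le C\,e^{-c|v-z|^2}\qquad(v,z\in\mathbb{C})
\]
for suitable constants $C,c>0$: the elementary Taylor-remainder bound $|e^w-Q_m(w)|\le\frac{|w|^m}{m!}\,e^{(\operatorname{Re}w)_+}$ takes care of the range in which $v$ is not close to $z$ (there $|\bar z v|\lesssim|v-z|^2$, so the polynomial factor is absorbed by the Gaussian), while for $v$ close to $z$ and $|z|$ large the sharper bound $|e^{\bar z v}-Q_m(\bar z v)|\le2|e^{\bar z v}|$ together with $2\operatorname{Re}(\bar z v)-|v|^2=|z|^2-|v-z|^2$ and $e^{|z|^2}-Q_m(|z|^2)\asymp e^{|z|^2}$ finishes the job; the bounded region $|z|\le\sigma$ is handled directly, using that there $e^{|z|^2}-Q_m(|z|^2)\asymp|z|^{2m}$. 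Granting this estimate, decompose $\mathbb{C}$ into the disks $D_j=\mathscr{B}(a_j;r)$ centered at the points $a_j$ of the lattice generated by $r$ and $ri$; then $\int_{D_j}|g|^p\,dA\le\pi r^2\sup_w\tilde g_r^p(w)$ and
\[
\mathfrak{B}_m(|g|^p)(z)\le\frac{1}{\pi}\sum_j\Big(\sup_{v\in D_j}\kappa(z,v)\Big)\int_{D_j}|g|^p\,dA\lesssim\big(\sup_w\tilde g_r^p(w)\big)\sum_j e^{-c\,\operatorname{dist}(z,D_j)^2},
\]
and the final sum is bounded uniformly in $z$ by the standard lattice-counting estimate (at most $O(k)$ lattice points at distance about $k$ from $z$). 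This establishes $(2)$, completing $(1)\Leftrightarrow(2)$.

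The step I expect to be the main obstacle is the uniform estimate $\kappa(z,v)\le C\,e^{-c|v-z|^2}$ used in $(1)\Rightarrow(2)$: because $|k_z^m(v)|^2|v|^{2m}e^{-|v|^2}$ is (a constant times) a normalized reproducing kernel, $\kappa$ is comparatively large for $v$ near $z$, so the crude remainder bound is too lossy there and one must split the argument according to the size of $\operatorname{Re}(\bar z v)$; one must also control the behaviour near the origin, where the denominator $e^{|z|^2}-Q_m(|z|^2)$ degenerates like $|z|^{2m}$ rather than behaving like $e^{|z|^2}$. The remaining ingredients — the application of Lemma~\ref{lem3}, the rewriting of the $L^{p,m}$-norm, the Berezin reformulation of $(2)$, and the lattice summation — are routine.
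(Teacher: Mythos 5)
Your proposal is correct, and its treatment of $(1)\Leftrightarrow(3)$ coincides with the paper's: both apply Lemma \ref{lem3} to $d\lambda=|g|^p\,dA$ and rewrite $\|L_gh\|_{p,m}^p=\omega_{p,m}\int_{\mathbb{C}}|h(v)v^me^{-\frac12|v|^2}|^p\,d\lambda(v)$. Where you genuinely diverge is $(1)\Leftrightarrow(2)$. The paper also routes this through Lemma \ref{lem3}: it notes that $g\in\mathcal{BA}_r^p$ is equivalent to the Carleson inequality for $d\lambda=|g|^p\,dA$ and then asserts (``hence, it follows'') that this is equivalent to boundedness of $\mathfrak{B}_m(|g|^p)$, without detailing that last passage --- which is not immediate for general $p$, since one cannot simply test the Carleson inequality on $h=(k_z^m)^{2/p}$ (not entire). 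You instead prove $(1)\Leftrightarrow(2)$ directly: a lower Gaussian bound $\kappa(z,v)\gtrsim e^{-|v-z|^2}$ on $\mathscr{B}(z;r)$ for $|z|$ large gives $(2)\Rightarrow(1)$, and an upper bound $\kappa(z,v)\le Ce^{-c|v-z|^2}$ plus a lattice decomposition gives $(1)\Rightarrow(2)$. This is more work but is self-contained and actually supplies the estimate the paper's ``hence'' elides; it is the standard argument one finds for the Fock-space analogue. Note also that the upper kernel estimate you single out as the main obstacle is essentially already available: the Cho--Zhu bound $\bigl|\tfrac{e^{z\bar v}-Q_m(z\bar v)}{(z\bar v)^m}\bigr|\lesssim(1+|z||v|)^{-m}e^{\frac12|z|^2+\frac12|v|^2-\frac18|z-v|^2}$ quoted in the proof of Lemma \ref{lem9} yields $\kappa(z,v)\lesssim e^{-\frac14|z-v|^2}$ away from $z=0$, and your separate treatment of the region $|z|\le\sigma$ (where $e^{|z|^2}-Q_m(|z|^2)\asymp|z|^{2m}$) correctly handles the degeneracy of the denominator there. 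Your phrase ``$|\bar zv|\lesssim|v-z|^2$'' for $v$ far from $z$ should be stated with the precise regime (e.g.\ $|v-z|\ge\frac12\max(|z|,|v|)$), but this is cosmetic.
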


\begin{proof} $ (1) \Leftrightarrow (2) $
Let $ g \in \mathcal{BA}_r^p $ then $ \int_{\mathscr{B}(z; r)} |g(v)|^p  dA(v) $ is bounded on $ \mathbb{C} $. Then lemma \ref{lem3} gives 
\begin{equation*}
\int_{\mathbb{C}} |h(v) v^m e^{- \frac{|v|^2}{2}}|^p d \lambda \leq C_0 \| h \|_{p,m}^p 
\end{equation*} for all entire functions $ h $ where $ d\lambda(v) = |g(v)|^p dA(v) $  if and only if $ g \in \mathcal{BA}_r^p $ and hence, it follows that  $ g \in \mathcal{BA}_r^p $ if and only if $ \mathfrak{B_m}|g|^p $ is bounded on $ \mathbb{C}$ where 
\begin{equation*}
\mathfrak{B_m}| g(z)|^p = \frac{1}{\pi (e^{|z|^2} - Q_m (|z|^2)} \int_{\mathbb{C}}| g(v)|^p | e^{\bar{z}v} - Q_m(\bar{z}v)|^2 e^{-|v|^2} dA(v).
\end{equation*} 
$(1) \Leftrightarrow (3) $ Let $ g \in \mathcal{BA}_r^p $ then by definition $ \tilde{g}_r^p $ is bounded. Define  a non-negative measure $ d \lambda (z) = |g(z)|^p  dA(z) $ on $ \mathbb{C} $ then $ \lambda(\mathscr{B}(z;r)) = \int_{\mathscr{B}(z;r)} d\lambda(v) = \int_{\mathscr{B}(z;r)} |g(v)|^p  dA(v) $. Therefore, from lemma \ref{lem3}, it follows that \begin{equation*}
\int_{\mathbb{C}} |h(v) v^m e^{- \frac{|v|^2}{2}}|^p d \lambda \leq C_0 \| h \|_{p,m}^p 
\end{equation*} for all entire functions $ h $ if and only if  $ g \in \mathcal{BA}_r^p $. Thus, for all $ h \in \mathscr{F}^{p,m} $, we have
\begin{align*}
\| L_g(h)\|_{p,m}^p &= \| hg\|_{p,m}^p = \omega_{p,m} \int_{\mathbb{C}} |h(v)|^p |g(v)|^p |v|^{mp} e^{-\frac{p}{2} |v|^2 } dA(v)\\
& = \omega_{p,m} \int_{\mathbb{C}} |h(v)v^m e^{-\frac{1}{2} |v|^2 }|^p d\lambda (v) \leq C \| h \|_{p,m}^p 
\end{align*} for some constant $ C > 0 $.
\end{proof}

Thus, from lemmas \ref{lem1} and \ref{lem4}, it can be observed that $ \mathcal{BO}_r $ and $ \mathcal{BA}_r^p $ are independent of $ r $ and hence, we will denote them by $ \mathcal{BO} $ and $ \mathcal{BA}^p $, respectively.

\begin{lemma} \label{lem5}
Let $ g \in \mathcal{BMO}_r^p $. Then 
\begin{equation*}
\frac{1}{\pi (e^{|z|^2} - Q_m (|z|^2)} \int_{\mathbb{C}} |g(v)- \mathfrak{B_m}g(z)|^p | e^{\bar{z}v} - Q_m(\bar{z}v)|^2 e^{-|v|^2} dA(v) 
\end{equation*} is bounded for  $ |z| > M $, for some positive constant $ M $.
\end{lemma}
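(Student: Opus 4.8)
The plan is to rewrite the quantity in the statement in terms of the probability measure
\[
d\mu_z(v)=\frac{1}{\pi\bigl(e^{|z|^2}-Q_m(|z|^2)\bigr)}\,\bigl|e^{\bar z v}-Q_m(\bar z v)\bigr|^2 e^{-|v|^2}\,dA(v),
\]
which is indeed a probability measure because $\mathfrak{B_m}1\equiv 1$. With this notation the expression to be bounded is $I(z)=\int_{\mathbb{C}}|g(v)-\mathfrak{B_m}g(z)|^p\,d\mu_z(v)=\|g-\mathfrak{B_m}g(z)\|_{L^p(d\mu_z)}^p$, and since $\mathfrak{B_m}g(z)=\int_{\mathbb{C}}g\,d\mu_z$, for every constant $c$ H\"older's inequality (with $\mu_z$ a probability measure) gives $|\mathfrak{B_m}g(z)-c|\le\|g-c\|_{L^p(d\mu_z)}$, whence $I(z)^{1/p}\le 2\,\|g-c\|_{L^p(d\mu_z)}$. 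So it is enough to exhibit, for each $z$, a constant $c=c(z)$ making $\int_{\mathbb{C}}|g(v)-c(z)|^p\,d\mu_z(v)$ bounded for $|z|>M$.

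Next I would invoke Lemma~\ref{lem1}(6) together with the $r$-independence established after Proposition~\ref{lem4} to write $g=g_1+g_2$ with $g_1\in\mathcal{BO}$ and $g_2\in\mathcal{BA}^p$; here $g_2=g-g_1\in\Omega_m$ because $g\in\Omega_m$ and $g_1$ grows at most linearly (by Lemma~\ref{lem1}(3)). Taking $c(z)=g_1(z)$ and using the triangle inequality in $L^p(d\mu_z)$,
\[
\|g-g_1(z)\|_{L^p(d\mu_z)}\le\|g_1-g_1(z)\|_{L^p(d\mu_z)}+\|g_2\|_{L^p(d\mu_z)}.
\]
The second term is controlled by Proposition~\ref{lem4}: $\|g_2\|_{L^p(d\mu_z)}^p=\mathfrak{B_m}|g_2|^p(z)$ is bounded on $\mathbb{C}$ since $g_2\in\mathcal{BA}^p$. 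For the first term, Lemma~\ref{lem1}(3) yields $|g_1(v)-g_1(z)|\le C_0(|v-z|+1)$, hence $\|g_1-g_1(z)\|_{L^p(d\mu_z)}^p\le C\bigl(1+\int_{\mathbb{C}}|v-z|^p\,d\mu_z(v)\bigr)$, and the whole argument reduces to the single moment estimate
\[
\int_{\mathbb{C}}|v-z|^p\,d\mu_z(v)\le C\qquad\text{for }|z|>M.
\]

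This last estimate is the heart of the matter, and the point is that one must \emph{not} bound $|e^{\bar z v}-Q_m(\bar z v)|^2$ through the integral inequality recalled at the start of this section, which would cost a spurious factor $|z|^p$. Instead I would use the exact identity $|e^{\bar z v}|^2 e^{-|v|^2}=e^{|z|^2}e^{-|v-z|^2}$. Splitting $|e^{\bar z v}-Q_m(\bar z v)|^2\le 2|e^{\bar z v}|^2+2|Q_m(\bar z v)|^2$, the contribution of $|e^{\bar z v}|^2$ becomes, after the substitution $w=v-z$, a constant multiple of $\dfrac{e^{|z|^2}}{e^{|z|^2}-Q_m(|z|^2)}\displaystyle\int_{\mathbb{C}}|w|^p e^{-|w|^2}\,dA(w)$, which is $O(1)$ because $e^{|z|^2}-Q_m(|z|^2)\ge\tfrac12 e^{|z|^2}$ for $|z|$ large; while the contribution of $|Q_m(\bar z v)|^2$ is, by the crude bounds $|Q_m(\bar z v)|^2\lesssim(1+|z|^{2(m-1)})(1+|v|^{2(m-1)})$ and $|v-z|^p\lesssim|v|^p+|z|^p$ and the finiteness of the Gaussian moments $\int_{\mathbb{C}}|v|^{N}e^{-|v|^2}\,dA(v)$, at most a polynomial in $|z|$, which tends to $0$ once divided by $e^{|z|^2}-Q_m(|z|^2)$. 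Hence the moment estimate holds for all sufficiently large $|z|$; note that when $m=0$ the measure $d\mu_z$ is the genuine Gaussian $\frac1\pi e^{-|v-z|^2}\,dA(v)$ and the bound is valid for every $z$, so the restriction $|z|>M$ is harmless. The only genuine obstacle here is recognizing the Gaussian structure of the leading term and keeping the polynomial remainder separate; everything else is routine.
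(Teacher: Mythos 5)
Your argument is correct, and it shares the paper's overall strategy --- decompose $g$ via Lemma~\ref{lem1}(6) into a $\mathcal{BO}$ part plus a $\mathcal{BA}^p$ part and control the latter through Proposition~\ref{lem4} --- but your execution of the $\mathcal{BO}$ part is genuinely different. The paper centres each piece at its own Berezin transform and estimates $|g_+(v)-\mathfrak{B_m}g_+(z)|$ by writing $\mathfrak{B_m}g_+(z)$ as an average and applying the $\mathcal{BO}$ bound twice, ending in a double Gaussian integral $\iint(|u-v|+1)^p e^{-|u|^2}e^{-|v|^2}\,dA(u)\,dA(v)$; along the way it uses the comparison $|e^{\bar z v}-Q_m(\bar z v)|^2 e^{-|v|^2}\lesssim e^{|z|^2}e^{-|z-v|^2}$, justified only via the limit of $1-e^{-\bar z v}Q_m(\bar z v)$ for $v\in\mathscr{B}(z;r)$ even though the integration runs over all of $\mathbb{C}$. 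You instead centre at the point value $g_1(z)$, pay a harmless factor of $2$ by Jensen's inequality for the probability measure $d\mu_z$, and reduce everything to the single moment bound $\int_{\mathbb{C}}|v-z|^p\,d\mu_z(v)\le C$, which you prove from the exact identity $|e^{\bar z v}|^2e^{-|v|^2}=e^{|z|^2}e^{-|v-z|^2}$ together with a polynomial remainder coming from $Q_m$. This is cleaner and in fact more careful about the kernel than the paper's treatment; the only thing the paper's route avoids is the (easy) observation that $\mu_z$ is a probability measure. One point worth making explicit in your write-up is that $\mathfrak{B_m}g(z)$ is finite, so that the centering comparison $|\mathfrak{B_m}g(z)-c|\le\|g-c\|_{L^p(d\mu_z)}$ is legitimate; this already follows from your own estimates showing $g_1,g_2\in L^1(d\mu_z)$.
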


\begin{proof}
Let $ g \in \mathcal{BMO}_r^p \subset \mathcal{BO} + \mathcal{BA}_r^p $, therefore, there exist two functions $ g_+ , g_- $ on $\mathbb{C} $ such that $ g_+ \in \mathcal{BO}_r $ and $ g_- \in \mathcal{BA}_r^p $.
Since $ g_+ \in \mathcal{BO}_r $, therefore, by using lemma \ref{lem2} and the fact that $ \mathcal{BO}_r $ is independent of $ r $ and 
\begin{equation*}
 \lim_{\substack{v \in \mathscr{B}(z; r)\\ |z| \rightarrow \infty}} ( 1 - e^{-\bar{z}v}Q_m(\bar{z}v)) = 1,
\end{equation*}
 it follows that 
\begin{align*}
& \left \{ \frac{1}{\pi (e^{|z|^2} - Q_m (|z|^2)} \int_{\mathscr{B}(z;r)} |g_+(v)- \mathfrak{B_m}g_+(z)|^p | e^{\bar{z}v} - Q_m(\bar{z}v)|^2 e^{-|v|^2} dA(v) \right \}\\
=& \left \{ \frac{1}{\pi (e^{|z|^2} - Q_m (|z|^2)} \int_{\mathscr{B}(z;r)} |g_+(v)- \mathfrak{B_m}g_+(z)|^p |e^{\bar{z}v}|^2 | 1 - e^{-\bar{z}v}Q_m(\bar{z}v)|^2 e^{-|v|^2} dA(v)  \right \}\\
& \leq C \left \{  \int_{\mathscr{B}(z;r)} |g_+(v)- \mathfrak{B_m}g_+(z)|^p e^{-|z-v|^2} dA(v) \right \}\\
& \leq C \left \{  \int_{\mathbb{C}} |g_+(v)- \mathfrak{B_m}g_+(z)|^p e^{-|z-v|^2} dA(v) \right \}\\
& = C \left \{  \int_{\mathbb{C}} |g_+(z-v)- \mathfrak{B_m}g_+(z)|^p e^{-|v|^2} dA(v) \right \}, 
\end{align*} for all $|z| > M $ for some positive constants $C $ and $M $, where 
\begin{align*}
&| g_+(z-v)- \mathfrak{B_m}g_+(z)|\\
& = |g_+(z-v)- \frac{1}{\pi (e^{|z|^2} - Q_m (|z|^2)} \int_{\mathbb{C}} g_+(u) | e^{\bar{z}u} - Q_m(\bar{z}u)|^2 e^{-|v|^2} dA(u)|\\
& = |\frac{1}{\pi (e^{|z|^2} - Q_m (|z|^2)} \int_{\mathbb{C}} (g_+(z-v)- g_+(u)) | e^{\bar{z}u} - Q_m(\bar{z}u)|^2 e^{-|v|^2} dA(u)|\\
& = | \displaystyle \lim_{r \rightarrow \infty } \frac{1}{\pi (e^{|z|^2} - Q_m (|z|^2)} \int_{\mathscr{B}(z;r)} ( g_+(z-v)- g_+(u)) | e^{\bar{z}u} - Q_m(\bar{z}u)|^2 e^{-|u|^2} dA(u)|\\
& \leq  \displaystyle  \lim_{r \rightarrow \infty } |\frac{1}{\pi (e^{|z|^2} - Q_m (|z|^2)} \int_{\mathscr{B}(z;r)}( g_+(z-v)- g_+(u)) | e^{\bar{z}u} |^2 | 1 - e^{-\bar{z}u} Q_m(\bar{z}u)|^2 e^{-|u|^2} dA(u)|\\
& \leq C \displaystyle  \lim_{r \rightarrow \infty }  |\int_{\mathscr{B}(z;r)}( g_+(z-v)- g_+(u)) |  e^{-|z -u|^2} dA(u)|\\
& \leq C | \int_{\mathbb{C}} ( g_+(z-v)-g_+(u))   e^{-|z-u|^2} dA(u)|\\
& = C  \int_{\mathbb{C}}| g_+(z-v)-g_+(z - u)|   e^{-|u|^2} dA(u)|
\end{align*} for all $|z| > M $.
Therefore, 
\begin{align*}
& \left \{ \frac{1}{\pi (e^{|z|^2} - Q_m (|z|^2)} \int_{\mathscr{B}(z;r)} |g_+(v)- \mathfrak{B_m}g_+(z)|^p | e^{\bar{z}v} - Q_m(\bar{z}v)|^2 e^{-|v|^2} dA(v) \right \}\\
& \leq C^2 \iint_{\mathbb{C}} | g_+(z-v)-g_+(z - u)|^p   e^{-|u|^2} dA(u)e^{-|v|^2} dA(v)\\
& \leq \iint_{\mathbb{C}}(|u-v| +1)^p  e^{-|u|^2} dA(u)e^{-|v|^2} dA(v)\\
\end{align*} for all $|z| > M $.
Now, since $ g_- \in \mathcal{BA}_r^p $, therefore by lemma \ref{lem4}, there exists a positive constant $ C $ such that 
\begin{equation*}
\frac{1}{\pi (e^{|z|^2} - Q_m (|z|^2)} \int_{\mathbb{C}}| g_-(v)|^p | e^{\bar{z}v} - Q_m(\bar{z}v)|^2 e^{-|v|^2} dA(v) \leq C
\end{equation*} 
for all $ z \in \mathbb{C}$. Consider 

\begin{align*}
& \left \{ \frac{1}{\pi (e^{|z|^2} - Q_m (|z|^2)} \int_{\mathbb{C}} |g_-(v)- \mathfrak{B_m}g_-(z)|^p | e^{\bar{z}v} - Q_m(\bar{z}v)|^2 e^{-|v|^2} dA(v) \right \}^\frac{1}{p} \\
& \leq \left \{ \frac{1}{\pi (e^{|z|^2} - Q_m (|z|^2)} \int_{\mathbb{C}} |g_-(v)|^p | e^{\bar{z}v} - Q_m(\bar{z}v)|^2 e^{-|v|^2} dA(v) \right \}^\frac{1}{p} + | \mathfrak{B_m}g_-(z)| \| k_z \|_{2,m}^\frac{2}{p}\\
& \leq \left \{ \frac{1}{\pi (e^{|z|^2} - Q_m (|z|^2)} \int_{\mathbb{C}} |g_-(v)|^p | e^{\bar{z}v} - Q_m(\bar{z}v)|^2 e^{-|v|^2} dA(v) \right \}^\frac{1}{p} + | \mathfrak{B_m}g_-(z)|\\
& \leq C + | \mathfrak{B_m}g_-(z)|,
\end{align*}
where 
\begin{align*}
| \mathfrak{B_m}g_-(z)| & \leq  \frac{1}{\pi (e^{|z|^2} - Q_m (|z|^2)} \int_{\mathbb{C}} |g_-(v)| | e^{\bar{z}v} - Q_m(\bar{z}v)|^2 e^{-|v|^2} dA(v) \\
& \leq \left \{\frac{1}{\pi (e^{|z|^2} - Q_m (|z|^2)} \int_{\mathbb{C}} |g_-(v)|^p | e^{\bar{z}v} - Q_m(\bar{z}v)|^2 e^{-|v|^2} dA(v) \right \}^\frac{1}{p} \| k_z \|_{2,m}^\frac{2}{q}\\
& = \left \{\frac{1}{\pi (e^{|z|^2} - Q_m (|z|^2)} \int_{\mathbb{C}} |g_-(v)|^p | e^{\bar{z}v} - Q_m(\bar{z}v)|^2 e^{-|v|^2} dA(v) \right \}^\frac{1}{p}\\
& \leq C.
\end{align*}
Therefore, 
\begin{equation*}
 \left \{ \frac{1}{\pi (e^{|z|^2} - Q_m (|z|^2)} \int_{\mathbb{C}} |g_-(v)- \mathfrak{B_m}h(z)|^p | e^{\bar{z}v} - Q_m(\bar{z}v)|^2 e^{-|v|^2} dA(v) \right \}^\frac{1}{p} \leq 2 C 
\end{equation*}  and hence, we get the result.
\end{proof}

\begin{lemma} \label{lem6}
Suppose there exists a positive constant $ M $ such that
\begin{equation*}
\underset{|z |> M}{Sup} \left \{ \frac{1}{\pi (e^{|z|^2} - Q_m (|z|^2)} \int_{\mathbb{C}} |g(v)- \mathfrak{B_m}g(z)|^p | e^{\bar{z}v} - Q_m(\bar{z}v)|^2 e^{-|v|^2} dA(v) \right \}, 
\end{equation*} is bounded. Then, there exists a constant $ M' > 0 $ such that for each $ z \in \mathbb{C} $, there exists a constant $ \mu_z $ such that 
\begin{equation*}
\underset{|z| > M'}{Sup} \left \{ \frac{1}{ \pi r^2} \int_{\mathscr{B}(z; r)} |g(v) - \mu_z|^p  dA(v)\right \} 
\end{equation*} is bounded.
\end{lemma}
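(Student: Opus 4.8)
The plan is to take $\mu_z = \mathfrak{B_m}g(z)$ for every $z$, and to recover the Euclidean average estimate simply by restricting the integral in the hypothesis to the small disk $\mathscr{B}(z;r)$ and bounding the weight $|e^{\bar z v} - Q_m(\bar z v)|^2 e^{-|v|^2}$ from below on that disk. (First one checks that $\mathfrak{B_m}g(z)$ is well defined for $|z| > M$: by H\"older's inequality together with the fact that the measure $|e^{\bar z v} - Q_m(\bar z v)|^2 e^{-|v|^2}\, dA(v)$ has finite total mass $\pi(e^{|z|^2} - Q_m(|z|^2))$, the boundedness hypothesis forces $\int_{\mathbb{C}} |g(v)|\, |e^{\bar z v} - Q_m(\bar z v)|^2 e^{-|v|^2}\, dA(v) < \infty$.)

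The computation I would carry out rests on the elementary identity
\[
|e^{\bar z v}|^2 e^{-|v|^2} = e^{2\,\mathrm{Re}(\bar z v) - |v|^2} = e^{|z|^2 - |v - z|^2},
\]
and hence
\[
|e^{\bar z v} - Q_m(\bar z v)|^2 e^{-|v|^2} = e^{|z|^2}\, e^{-|v - z|^2}\, |1 - e^{-\bar z v}Q_m(\bar z v)|^2 .
\]
Since $r$ is fixed, for $v \in \mathscr{B}(z;r)$ one has $\mathrm{Re}(\bar z v) \geq |z|^2 - r|z| \to \infty$ as $|z| \to \infty$, so the limit relation $\lim_{v \in \mathscr{B}(z;r),\, |z| \to \infty}(1 - e^{-\bar z v}Q_m(\bar z v)) = 1$ already recorded in the proof of Lemma \ref{lem5} produces an $M_0 > 0$ with $|1 - e^{-\bar z v}Q_m(\bar z v)| \geq \tfrac12$ for all $|z| > M_0$ and all $v \in \mathscr{B}(z;r)$. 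Combined with $|v - z| \leq r$ on that disk, this gives the uniform lower bound $|e^{\bar z v} - Q_m(\bar z v)|^2 e^{-|v|^2} \geq \tfrac14 e^{-r^2} e^{|z|^2}$ there. Using also the trivial upper bound $e^{|z|^2} - Q_m(|z|^2) \leq e^{|z|^2}$ (since $Q_m \geq 0$ on $[0,\infty)$), for $|z| > M'$, where $M' = \max\{M, M_0\}$, one obtains
\[
\frac{1}{\pi r^2} \int_{\mathscr{B}(z;r)} |g(v) - \mathfrak{B_m}g(z)|^p\, dA(v) \leq \frac{4 e^{r^2}}{r^2} \cdot \frac{1}{\pi(e^{|z|^2} - Q_m(|z|^2))} \int_{\mathbb{C}} |g(v) - \mathfrak{B_m}g(z)|^p\, |e^{\bar z v} - Q_m(\bar z v)|^2 e^{-|v|^2}\, dA(v),
\]
and since $M' \geq M$ the right-hand side is bounded uniformly in $|z| > M'$ by hypothesis, which is exactly the asserted estimate with $\mu_z = \mathfrak{B_m}g(z)$.

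There is thus essentially no serious obstacle here: the lemma is a localization of the hypothesis, and the only step requiring a little care is the uniform lower bound on the weight over the whole disk $\mathscr{B}(z;r)$. That is where the limit relation from the proof of Lemma \ref{lem5} is used, and it causes no trouble precisely because $r$ remains fixed while $\mathrm{Re}(\bar z v)$ stays of order $|z|^2$, so the polynomial correction $e^{-\bar z v}Q_m(\bar z v)$ decays to $0$. This lemma is the converse companion of Lemma \ref{lem5}.
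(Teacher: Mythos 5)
Your proposal is correct and follows essentially the same route as the paper's own proof: choose $\mu_z = \mathfrak{B_m}g(z)$, localize the hypothesis integral to $\mathscr{B}(z;r)$, and bound the kernel weight from below there via the identity $|e^{\bar z v}|^2 e^{-|v|^2} = e^{|z|^2 - |z-v|^2}$ together with the limit relation $1 - e^{-\bar z v}Q_m(\bar z v) \to 1$ on the disk. The only cosmetic difference is that you use the trivial inequality $e^{|z|^2} - Q_m(|z|^2) \le e^{|z|^2}$ where the paper invokes Lemma \ref{lem2}, and your write-up is if anything more careful about making the lower bound on the weight explicit.
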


\begin{proof} By using the fact that $ e^{-|z-v|^2} \geq a  $ for $ v \in \mathscr{B}(z;r) $ and for some constant $ a > 0 $, it follows that  
\begin{align}
&a C \left \{ \frac{1}{ \pi r^2} \int_{\mathscr{B}(z; r)} |g(v) - \mathfrak{B_m}g(z)|^p  dA(v)\right \}\nonumber \\
& \leq C \left \{ \frac{1}{ \pi r^2} \int_{\mathscr{B}(z; r)} |g(v) - \mathfrak{B_m}g(z)|^p e^{-|z-v|^2} dA(v)\right \}\nonumber \\
& = C \left \{ \frac{1}{ \pi r^2} \int_{\mathscr{B}(z; r)} |g(v) - \mathfrak{B_m}g(z)|^p  e^{-|z|^2} |e^{\bar{z}v}|^2 e^{-|v|^2}dA(v)\right \}\nonumber\\
\label{eq1} &\leq \left \{ \frac{1}{ \pi^2 r^2((e^{|z|^2} - Q_m (|z|^2))} \int_{\mathscr{B}(z; r)} |g(v) - \mathfrak{B_m}g(z)|^p | e^{\bar{z}v} - Q_m(\bar{z}v)|^2 e^{-|v|^2}dA(v)\right \}\\
& \leq \left \{ \frac{1}{ \pi^2 r^2((e^{|z|^2} - Q_m (|z|^2))} \int_{\mathbb{C}} |g(v) - \mathfrak{B_m}g(z)|^p | e^{\bar{z}v} - Q_m(\bar{z}v)|^2 e^{-|v|^2}dA(v)\right \}\nonumber
\end{align} for all $ |z| > M' $, where the Eq. \eqref{eq1} follows from lemma \ref{lem2} and \begin{equation*}
 \lim_{\substack{v \in \mathscr{B}(z; r)\\ |z| \rightarrow \infty}} ( 1 - e^{-\bar{z}v}Q_m(\bar{z}v)) = 1. \qedhere
\end{equation*}
\end{proof}

Lemmas \ref{lem6} and \ref{lem7} jointly lead to the following result:
\begin{theorem}\label{thm1}
Let $ g \in \mathcal{BMO}_r^p$. Then the following conditions are equivalent:
\begin{enumerate}
\item There exists a constant $ M > 0 $ such that \begin{equation*}
\underset{|z| > M}{Sup} \left\{ \frac{1}{\pi (e^{|z|^2} - Q_m (|z|^2)} \int_{\mathbb{C}} |g(v)- \mathfrak{B_m}g(z)|^p | e^{\bar{z}v} - Q_m(\bar{z}v)|^2 e^{-|v|^2} dA(v)\right \} < \infty;
\end{equation*} 
\item There exists a constant $ M > 0 $ such that for each $ z \in \mathbb{C}  $, there exists a constant $ \mu_z $ such that 
\begin{equation*}
\underset{|z| > M}{Sup} \left \{ \frac{1}{ \pi r^2} \int_{\mathscr{B}(z; r)} |g(v) - \mu_z|^p  dA(v)\right \} < \infty;
\end{equation*} 
\item There exists a constant $ M > 0 $ such that for each $ z \in \mathbb{C}  $, there exists a constant $ \mu_z $ such that 
 \begin{equation*}
\underset{|z| > M}{Sup}\left \{ \frac{1}{\pi (e^{|z|^2} - Q_m (|z|^2)} \int_{\mathbb{C}} |g(v)- \mu_z |^p | e^{\bar{z}v} - Q_m(\bar{z}v)|^2 e^{-|v|^2} dA(v)\right\} < \infty.
\end{equation*} 
\end{enumerate}
\end{theorem}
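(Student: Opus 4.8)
The plan is to prove the cyclic chain $(1)\Rightarrow(2)\Rightarrow(3)\Rightarrow(1)$. Note first that $(1)$ is precisely the instance of $(3)$ in which one takes $\mu_z=\mathfrak{B_m}g(z)$, which is finite for $|z|>M$ by Lemma~\ref{lem5}; hence $(1)\Rightarrow(3)$ is automatic, and the real content is contained in $(1)\Rightarrow(2)$, $(2)\Rightarrow(3)$ and $(3)\Rightarrow(1)$.

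The implication $(1)\Rightarrow(2)$ is exactly Lemma~\ref{lem6}: its hypothesis is condition $(1)$ and its conclusion is condition $(2)$, with witnesses $\mu_z=\mathfrak{B_m}g(z)$. For $(3)\Rightarrow(1)$ I would argue by the triangle inequality together with the fact that the Berezin transform reproduces constants. From $|g(v)-\mathfrak{B_m}g(z)|\le|g(v)-\mu_z|+|\mu_z-\mathfrak{B_m}g(z)|$ and $(a+b)^p\le 2^{p-1}(a^p+b^p)$, the quantity in $(1)$ is at most $2^{p-1}$ times the quantity in $(3)$ plus $2^{p-1}|\mu_z-\mathfrak{B_m}g(z)|^p\,\mathfrak{B_m}(1)(z)$, where $\mathfrak{B_m}(1)(z)=\|k_z^m\|_{2,m}^2=1$. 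Since $\mu_z$ equals its own Berezin transform, $\mu_z-\mathfrak{B_m}g(z)=\mathfrak{B_m}(\mu_z-g)(z)$, so Jensen's inequality applied to the probability measure $\tfrac{1}{\pi(e^{|z|^2}-Q_m(|z|^2))}|e^{\bar z v}-Q_m(\bar z v)|^2e^{-|v|^2}\,dA(v)$ bounds $|\mu_z-\mathfrak{B_m}g(z)|$ by the $p$-th root of the quantity in $(3)$. Thus $(3)$ with cutoff $M$ yields $(1)$ with the same cutoff.

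The substantive step is $(2)\Rightarrow(3)$, the passage from local $L^p$-averages to the globally weighted integral, and this is where I expect the main difficulty. Fixing $\mu_z$ to be the constant furnished by $(2)$ and taking the same witnesses in $(3)$, I would cover $\mathbb{C}$ by the lattice of disks $\mathscr{B}(b_n;r)$ generated by $r$ and $ri$ (as in Lemma~\ref{lem3}), bound the contribution of $\mathscr{B}(z;r)$ directly by $(2)$, and on a remote disk $\mathscr{B}(b_n;r)$ split $|g(v)-\mu_z|^p\lesssim|g(v)-\mu_{b_n}|^p+|\mu_{b_n}-\mu_z|^p$: the first summand is again controlled by $(2)$, while $|\mu_{b_n}-\mu_z|$ grows at most linearly in $|b_n-z|$, by telescoping along a chain of overlapping disks joining $b_n$ to $z$, each comparison being controlled via Lemma~\ref{lem1}. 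After dividing by $e^{|z|^2}-Q_m(|z|^2)\sim e^{|z|^2}$ (Lemma~\ref{lem2}), the weight $|e^{\bar z v}-Q_m(\bar z v)|^2e^{-|v|^2}$ reduces — up to the bounded factor $|1-e^{-\bar z v}Q_m(\bar z v)|^2$, which tends to $1$ on $\mathscr{B}(z;r)$ as $|z|\to\infty$, plus a lower-order term involving $|Q_m(\bar z v)|^2$ and an extra $e^{-|z|^2}$ — to the Gaussian $e^{-|v-z|^2}$; the polynomially growing coefficients $(|b_n-z|+1)^p$ are then absorbed by the Gaussian decay when summed over the lattice, and the residual term (a polynomial in $|z|$ times $e^{-|z|^2}$) is disposed of by enlarging $M$. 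Summing the geometric series over $n$ closes the estimate and delivers $(3)$.
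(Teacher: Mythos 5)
Your argument is correct in outline, and it is substantially more self-contained than what the paper actually provides: the paper's proof consists of the single line that $(1)\Leftrightarrow(3)$ ``follows on the proof of Lemma 3.1 of \cite{peralahankel}'', preceded by the remark that Lemmas \ref{lem6} and \ref{lem7} ``jointly lead to'' the theorem, so the equivalence is never closed explicitly in the text. Your cycle --- $(1)\Rightarrow(2)$ by quoting Lemma \ref{lem6}, $(3)\Rightarrow(1)$ by the triangle inequality together with the observation that $d\nu_z=\tfrac{1}{\pi(e^{|z|^2}-Q_m(|z|^2))}|e^{\bar zv}-Q_m(\bar zv)|^2e^{-|v|^2}dA(v)$ is a probability measure (since $\|k_z^m\|_{2,m}=1$) so that Jensen controls $|\mu_z-\mathfrak{B_m}g(z)|$, and $(2)\Rightarrow(3)$ by a lattice-and-chaining argument --- supplies exactly the implication the paper outsources, and is in the spirit of the cited external lemma. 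Two points in your $(2)\Rightarrow(3)$ step need tightening. First, the factorization $|e^{\bar zv}-Q_m(\bar zv)|^2=|e^{\bar zv}|^2\,|1-e^{-\bar zv}Q_m(\bar zv)|^2$ with the second factor treated as bounded is only legitimate near the diagonal ($v\in\mathscr{B}(z;r)$, $|z|$ large); in the far field, where $\mathrm{Re}(\bar zv)$ is very negative, $|e^{-\bar zv}Q_m(\bar zv)|$ blows up. For the global integral you should instead use the pointwise kernel estimate already invoked in the proof of Lemma \ref{lem9}, namely $\bigl|(e^{\bar zv}-Q_m(\bar zv))/(\bar zv)^m\bigr|\lesssim e^{\frac12|z|^2+\frac12|v|^2-\frac18|z-v|^2}/(1+|z||v|)^m$, which gives the uniform Gaussian bound $|e^{\bar zv}-Q_m(\bar zv)|^2e^{-|v|^2}/(e^{|z|^2}-Q_m(|z|^2))\lesssim e^{-\frac14|z-v|^2}$ for $|z|>M$; that decay still absorbs the factors $(|b_n-z|+1)^p$ in the lattice sum. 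Second, the telescoping estimate $|\mu_{b_n}-\mu_z|\lesssim 1+|b_n-z|$ requires the local $L^p$-average bound at every center along the chain, not only at centers of modulus greater than $M$; this is supplied not by condition $(2)$ itself but by the standing hypothesis $g\in\mathcal{BMO}_r^p$ via Lemma \ref{lem1}(1) (or by routing the chain around $\mathscr{B}(0;M)$). With these two repairs your proof is complete and, unlike the paper's, does not rely on an external reference.
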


\begin{proof} $ (1) \Leftrightarrow (3) $ This follows on the proof of lemma $ 3.1 $ of \cite{peralahankel}.
\end{proof}

\begin{proposition}\label{lem7}  Let $ g \in \mathcal{BMO}_{2r}^p $. Then there exists a positive constant $ M $ such that the followings hold:

(1)  
$
\underset{|z |>M +r}{Sup}\left\{ \underset{v \in \mathscr{B}(z; r)}{Sup} | \mathfrak{B_m}g(v) - \mathfrak{B_m}g(z) |\right\} < \infty 
$

(2)
$
\underset{|z |>M +r }{Sup} \left\{\frac{1}{ \pi r^2} \int_{\mathscr{B}(z; r)} |(g - \mathfrak{B_m}g)(v)|^p  dA(v) \right\}< \infty.
$ 
\end{proposition}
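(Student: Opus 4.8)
The plan is to split \(g\) via the inclusion \(\mathcal{BMO}_r^p\subset\mathcal{BO}+\mathcal{BA}^p\), treat the two summands separately in part (1), and then derive part (2) from part (1) together with Lemmas \ref{lem5} and \ref{lem6}. Since \(g\in\mathcal{BMO}_{2r}^p\), parts (4)--(5) of Lemma \ref{lem1} give the decomposition \(g=g_{+}+g_{-}\) with \(g_{+}:=\tilde g_{r}\in\mathcal{BO}_r=\mathcal{BO}\) and \(g_{-}:=g-\tilde g_{r}\in\mathcal{BA}_r^{p}=\mathcal{BA}^{p}\); by linearity of the Berezin transform, \(\mathfrak{B_m}g=\mathfrak{B_m}g_{+}+\mathfrak{B_m}g_{-}\).

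For part (1) I would first establish the auxiliary estimate that \(|\mathfrak{B_m}g_{+}(w)-g_{+}(w)|\) is bounded for all \(|w|>M\), for a suitable \(M>0\). Using \(\mathfrak{B_m}(1)\equiv1\) one writes \(\mathfrak{B_m}g_{+}(w)-g_{+}(w)=\frac{1}{\pi(e^{|w|^{2}}-Q_m(|w|^{2}))}\int_{\mathbb{C}}(g_{+}(u)-g_{+}(w))\,|e^{\bar wu}-Q_m(\bar wu)|^{2}e^{-|u|^{2}}\,dA(u)\), and since \(g_{+}\in\mathcal{BO}\), Lemma \ref{lem1}(3) bounds \(|g_{+}(u)-g_{+}(w)|\le C_{0}(|u-w|+1)\). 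Arguing as in the proof of Lemma \ref{lem5} — factoring \(|e^{\bar wu}-Q_m(\bar wu)|^{2}=|e^{\bar wu}|^{2}\,|1-e^{-\bar wu}Q_m(\bar wu)|^{2}\), using \(\lim_{|w|\to\infty}(1-e^{-\bar wu}Q_m(\bar wu))=1\) on bounded disks, Lemma \ref{lem2}, and the integral estimate recalled from \cite{chofock1} — one dominates the right-hand side by \(C\int_{\mathbb{C}}(|u-w|+1)e^{-|w-u|^{2}}\,dA(u)=C\int_{\mathbb{C}}(|\zeta|+1)e^{-|\zeta|^{2}}\,dA(\zeta)=:C'<\infty\), uniformly for \(|w|>M\). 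For \(g_{-}\) I would reuse the bound already obtained inside the proof of Lemma \ref{lem5}, namely \(|\mathfrak{B_m}g_{-}(w)|\le C\) for every \(w\in\mathbb{C}\) (from Proposition \ref{lem4} and Hölder's inequality, using \(\|k_{w}^{m}\|_{2,m}=1\)). Now, for \(|z|>M+r\) and \(v\in\mathscr{B}(z;r)\) we have \(|v|\ge|z|-r>M\), so the triangle inequality gives \(|\mathfrak{B_m}g_{+}(v)-\mathfrak{B_m}g_{+}(z)|\le|\mathfrak{B_m}g_{+}(v)-g_{+}(v)|+|g_{+}(v)-g_{+}(z)|+|g_{+}(z)-\mathfrak{B_m}g_{+}(z)|\le2C'+\|g_{+}\|_{\mathcal{BO}_r}\) (the middle term being at most \(\|g_{+}\|_{\mathcal{BO}_r}\) because \(v\in\mathscr{B}(z;r)\)), while \(|\mathfrak{B_m}g_{-}(v)-\mathfrak{B_m}g_{-}(z)|\le2C\). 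Adding these and taking the supremum over \(v\in\mathscr{B}(z;r)\) and then over \(|z|>M+r\) yields (1).

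For part (2) I would combine (1) with Lemmas \ref{lem5} and \ref{lem6}. Since \(g\in\mathcal{BMO}_{2r}^{p}\subset\mathcal{BMO}_r^{p}\) (Lemma \ref{lem1}(2)), Lemma \ref{lem5} makes \(\frac{1}{\pi(e^{|z|^{2}}-Q_m(|z|^{2}))}\int_{\mathbb{C}}|g(v)-\mathfrak{B_m}g(z)|^{p}|e^{\bar zv}-Q_m(\bar zv)|^{2}e^{-|v|^{2}}\,dA(v)\) bounded for \(|z|>M_{1}\), and then Lemma \ref{lem6} (whose proof uses \(\mu_{z}=\mathfrak{B_m}g(z)\)) yields constants \(M_{2}\) and \(C_{1}>0\) with \(\frac{1}{\pi r^{2}}\int_{\mathscr{B}(z;r)}|g(v)-\mathfrak{B_m}g(z)|^{p}\,dA(v)\le C_{1}\) for \(|z|>M_{2}\). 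Enlarging \(M\) so that \(M+r>M_{2}\) and \(M\) also exceeds the constant from part (1), the convexity inequality \(|a+b|^{p}\le2^{p-1}(|a|^{p}+|b|^{p})\) gives, for \(|z|>M+r\), \(\frac{1}{\pi r^{2}}\int_{\mathscr{B}(z;r)}|(g-\mathfrak{B_m}g)(v)|^{p}\,dA(v)\le\frac{2^{p-1}}{\pi r^{2}}\int_{\mathscr{B}(z;r)}|g(v)-\mathfrak{B_m}g(z)|^{p}\,dA(v)+2^{p-1}\bigl(\sup_{v\in\mathscr{B}(z;r)}|\mathfrak{B_m}g(z)-\mathfrak{B_m}g(v)|\bigr)^{p}\le2^{p-1}C_{1}+2^{p-1}C_{2}^{p}\), where \(C_{2}\) is the supremum bound furnished by (1). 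This proves (2).

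I expect the main obstacle to be the auxiliary estimate in part (1): showing that \(|\mathfrak{B_m}g_{+}(w)-g_{+}(w)|\) stays bounded as \(|w|\to\infty\) requires verifying that the linear \(\mathcal{BO}\)-growth weight \((|u-w|+1)\) is integrated to a finite constant, uniformly for large \(|w|\), against the non-radial, \(Q_m\)-corrected kernel \(|e^{\bar wu}-Q_m(\bar wu)|^{2}e^{-|u|^{2}}/(e^{|w|^{2}}-Q_m(|w|^{2}))\) — i.e. that this kernel behaves like the Gaussian \(e^{-|w-u|^{2}}\) up to lower-order terms controlled by \cite{chofock1} and Lemma \ref{lem2}. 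The remaining ingredients — the triangle-inequality bookkeeping, the passage from the Berezin-type integral to the Euclidean average via Lemma \ref{lem6}, and the convexity inequality — are routine.
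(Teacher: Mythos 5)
Your proposal is correct and follows essentially the same route as the paper: both arguments hinge on the decomposition $g=\tilde g_r+(g-\tilde g_r)$ from Lemma \ref{lem1} and on showing that $\mathfrak{B_m}g-\tilde g_r$ stays bounded for $|z|>M$ (you split this into the two estimates $|\mathfrak{B_m}g_{+}-g_{+}|\leq C'$ and $|\mathfrak{B_m}g_{-}|\leq C$, while the paper bounds the difference in one stroke via Jensen's inequality and the Lemma \ref{lem5}--\ref{lem6} kernel comparison). The only organizational difference is that you derive part (2) from part (1) together with Lemma \ref{lem6} (taking $\mu_z=\mathfrak{B_m}g(z)$) and the convexity inequality, whereas the paper reads (2) off directly from $g-\tilde g_r\in\mathcal{BA}^p$ plus the same bounded difference; both are valid.
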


\begin{proof} 
Let $ g \in \mathcal{BMO}_{2r}^p \subset \mathcal{BMO}_{r}^p $.
Consider 
\begin{align*}
&| \mathfrak{B_m}g(z) - \tilde{g}_r(z) | \\
& = |\frac{1}{ \pi r^2} \int_{\mathscr{B}(z; r)} \mathfrak{B_m}g(z)  dA(v)- \frac{1}{ \pi r^2} \int_{\mathscr{B}(z; r)} g (v)  dA(v)|\\
& \leq  \frac{1}{ \pi r^2} \int_{\mathscr{B}(z; r)} |g(v)- \mathfrak{B_m}g(z)|  dA(v)\\
& \leq  \left \{ \frac{1}{ \pi r^2} \int_{\mathscr{B}(z; r)} |g(v)- \mathfrak{B_m}g(z)|^p  dA(v)\right\}^\frac{1}{p}\\
& \leq C \left \{ \frac{1}{\pi (e^{|z|^2} - Q_m (|z|^2)} \int_{\mathbb{C}} |g(v)- \mathfrak{B_m}g(z)|^p | e^{\bar{z}v} - Q_m(\bar{z}v)|^2 e^{-|v|^2} dA(v)\right \}^\frac{1}{p}\\
& \leq C_0,
\end{align*} for all $ |z| > M >0 $ and for some constant $C_0>0 $. Thus, $$ \underset{|z |>M +r}{Sup} \left\{\underset{v \in \mathscr{B}(z; r)}{Sup} |( \mathfrak{B_m}g -\tilde{g}_r) (v) - (\mathfrak{B_m} - \tilde{g}_r)g(z) | \right\} <\infty $$ and $$ \underset{|z |> M +r}{Sup} \left\{\frac{1}{ \pi r^2} \int_{\mathscr{B}(z; r)} |( \mathfrak{B_m}g - \tilde{g}_r)(v)|^p  dA(v)\right\}<\infty.
$$  Since $ g \in \mathcal{BMO}_{2r}^p \subset \mathcal{BMO}_{r}^p $, therefore by lemma \ref{lem1}, it follows that $ \tilde{g}_r \in \mathcal{BO} $ and $ g -  \tilde{g}_r \in \mathcal{BA}^p $, so  $ \mathfrak{B_m}g = \mathfrak{B_m}g - \tilde{g}_r + \tilde{g}_r $, $ g - \mathfrak{B_m}g = g - \tilde{g}_r + \tilde{g}_r - \mathfrak{B_m}g $, we get the desired result.
\end{proof}

\begin{lemma} \label{lem8}
If $ g \in \mathcal{BA}^p $, then $ H_g^p $ is bounded on $ \mathscr{F}^{p,m}$ for finite $ p \geq 1$. 
\end{lemma}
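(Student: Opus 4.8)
The plan is to reduce the boundedness of $H_g^p$ to that of the multiplication operator $L_g$, which Proposition \ref{lem4} already controls. First I would recall that by definition $H_g^p f = (I - P^m)(gf)$, and that Cho and Zhu's result quoted in the introduction gives that $P^m$ is a bounded projection from $L^{p,m}$ onto $\mathscr{F}^{p,m}$; hence $I - P^m$ is bounded on $L^{p,m}$ with norm at most $1 + \|P^m\|$. Consequently, for every $f \in \mathscr{F}^{p,m}$ for which $gf$ makes sense as an element of $L^{p,m}$, one has $\|H_g^p f\|_{p,m} \le (1 + \|P^m\|)\,\|gf\|_{p,m}$, so it suffices to bound $\|gf\|_{p,m}$ by a constant multiple of $\|f\|_{p,m}$.

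Next I would invoke the hypothesis: since $\mathcal{BA}^p = \mathcal{BA}_r^p$ is independent of $r$ (as noted after Proposition \ref{lem4}), the assumption $g \in \mathcal{BA}^p$ together with the equivalence $(1)\Leftrightarrow(3)$ of Proposition \ref{lem4} says precisely that the multiplication operator $L_g : \mathscr{F}^{p,m} \to L^{p,m}$, $L_g f = gf$, is bounded; that is, there is a constant $C > 0$ with $\|gf\|_{p,m} \le C\|f\|_{p,m}$ for all $f \in \mathscr{F}^{p,m}$. In particular $gf \in L^{p,m}$, so $P^m(gf)$ is well-defined, and combining with the first paragraph yields $\|H_g^p f\|_{p,m} \le C(1 + \|P^m\|)\,\|f\|_{p,m}$ for all $f \in \mathscr{F}^{p,m}$, which is the assertion.

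Finally I would attend to the small amount of bookkeeping needed for the statement to be meaningful, namely that $g \in \Omega_m$ so that $H_g^p$ is genuinely (densely) defined and the integral formula \eqref{seq} applies: because $k_v^m \in \mathscr{F}^{p',m}$ for every $p' \ge 1$ (Lemma 2.1 and the remark following it) and $L_g$ is bounded from $\mathscr{F}^{p,m}$ into $L^{p,m}$, we get $g k_v^m = L_g(k_v^m) \in L^{p,m} \subset \bigcup_{p'\ge 1} L^{p',m}$, so $g \in \Omega_m$. There is essentially no hard step here; the only points to watch are applying Proposition \ref{lem4} with the exponent $p$ appearing in the statement and using the norm of $P^m$ as an operator on $L^{p,m}$ (not on $L^{2,m}$).
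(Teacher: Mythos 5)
Your proposal is correct and follows essentially the same route as the paper: both reduce boundedness of $H_g^p=(I-P^m)L_g$ to the boundedness of $L_g$ supplied by Proposition \ref{lem4} together with the boundedness of the projection $P^m$ on $L^{p,m}$. Your version merely spells out the factorization and the $\Omega_m$ bookkeeping that the paper leaves implicit.
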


\begin{proof}
By using lemma \ref{lem4}, $ g \in \mathcal{BA}^p $ if and only if $ L_g $ is bounded on $ \mathscr{F}^{p,m} $ if and only if $ H_g $ is bounded on $ \mathscr{F}^{p,m}$.
\end{proof}

\begin{lemma} \label{lem9}
If $ g \in \mathcal{BO} $, then $ H_g^p $ is bounded on $ \mathscr{F}^{p,m} $ for all $ 1 \leq p \leq \infty$.
\end{lemma}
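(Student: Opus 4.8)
The plan is to use the integral formula \eqref{seq} together with the growth estimate for $\mathcal{BO}$ symbols. Since $\mathcal{BO}_r=\mathcal{BO}$ is independent of $r$, Lemma \ref{lem1}(3) gives a constant $C_0>0$ with $|g(z)-g(v)|\le C_0(|z-v|+1)$ for all $z,v\in\mathbb{C}$; in particular $g$ grows at most linearly, so $g\in\Omega_m$ and $H_g^p$ is densely defined. First I would record the pointwise identity $|K_z^m(v)|\,|z|^m|v|^m=m!\,|e^{\bar z v}-Q_m(\bar z v)|$, which follows at once from the closed form of $K^m$. Feeding the $\mathcal{BO}$ estimate into \eqref{seq} and writing $|v|^{2m}e^{-|v|^2}=\bigl(|v|^m e^{-\frac12|v|^2}\bigr)^2$, for $f\in\mathscr{F}^{p,m}$ and $F(v):=f(v)|v|^m e^{-\frac12|v|^2}$ (so that $\|F\|_{L^p(\mathbb C,dA)}$ and $\|f\|_{p,m}$ agree up to the constant $\omega_{p,m}^{1/p}$, with the evident modification for $p=\infty$) one obtains
\[
|H_g^p f(z)|\,|z|^m e^{-\frac12|z|^2}\ \le\ \frac{C_0\, m!}{\pi}\int_{\mathbb{C}} N(z,v)\,|F(v)|\,dA(v),\qquad N(z,v):=(|z-v|+1)\,|e^{\bar z v}-Q_m(\bar z v)|\,e^{-\frac12|z|^2-\frac12|v|^2}.
\]
Hence it suffices to prove that the integral operator with kernel $N$ is bounded on $L^p(\mathbb{C},dA)$ for every $1\le p\le\infty$, for this yields $\|H_g^p f\|_{p,m}\lesssim\|f\|_{p,m}$.

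The crux is the exact cancellation $\operatorname{Re}(\bar z v)-\tfrac12|z|^2-\tfrac12|v|^2=-\tfrac12|z-v|^2$, which gives $|e^{\bar z v}|\,e^{-\frac12|z|^2-\frac12|v|^2}=e^{-\frac12|z-v|^2}$. Using $|e^{\bar z v}-Q_m(\bar z v)|\le|e^{\bar z v}|+|Q_m(\bar z v)|$ I would split $N\le N_1+N_2$ with $N_1(z,v)=(|z-v|+1)e^{-\frac12|z-v|^2}$ and $N_2(z,v)=(|z-v|+1)\,|Q_m(\bar z v)|\,e^{-\frac12|z|^2-\frac12|v|^2}$, and call $T_1,T_2$ the corresponding integral operators. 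The operator $T_1$ is convolution against the fixed radial function $w\mapsto(|w|+1)e^{-\frac12|w|^2}\in L^1(\mathbb{C},dA)$, hence bounded on $L^p$ for every $1\le p\le\infty$ by Young's inequality. For $T_2$ I would bound $|Q_m(\bar z v)|\le\sum_{k=0}^{m-1}(|z||v|)^k$ and $|z-v|+1\le(|z|+1)(|v|+1)$ to get $N_2(z,v)\le\sum_{k=0}^{m-1}a_k(z)\,a_k(v)$ with $a_k(t):=(|t|+1)|t|^k e^{-\frac12|t|^2}$; since each $a_k$ lies in $L^s(\mathbb{C},dA)$ for every $1\le s\le\infty$ (Gaussian decay beats polynomial growth), Hölder's inequality gives $|T_2F(z)|\le\sum_k a_k(z)\,\|a_k\|_q\,\|F\|_p$ with $1/p+1/q=1$, whence $\|T_2F\|_p\le\bigl(\sum_{k=0}^{m-1}\|a_k\|_p\|a_k\|_q\bigr)\|F\|_p<\infty$. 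Adding the two bounds proves boundedness of the kernel operator, and hence of $H_g^p$, for all $p\in[1,\infty]$.

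The only genuinely new feature compared with the weighted Fock case of \cite{peralahankel} is the Taylor-polynomial correction $Q_m$ in the Fock--Sobolev kernel, and the point to watch is not to fold it into a crude bound such as $|e^{\bar z v}-Q_m(\bar z v)|\le e^{|z||v|}$, which would replace the angular Gaussian $e^{-\frac12|z-v|^2}$ by the weaker $e^{-\frac12(|z|-|v|)^2}$ and destroy the convolution estimate for $T_1$; instead one keeps the leading term $e^{\bar z v}$ intact and absorbs the polynomial remainder into the separate Gaussian factors $e^{-\frac12|z|^2}$ and $e^{-\frac12|v|^2}$, as above. Once this splitting is set up the argument is uniform in $p$, so no separate treatment of the endpoints $p=1$ or $p=\infty$ is needed.
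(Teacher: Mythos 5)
Your proof is correct, and it takes a genuinely different route from the paper's. The paper handles the kernel by invoking the Cho--Zhu pointwise estimate $\bigl|\tfrac{e^{z\bar v}-Q_m(z\bar v)}{(z\bar v)^m}\bigr|\lesssim (1+|z||v|)^{-m}\,e^{\frac12|z|^2+\frac12|v|^2-\frac18|z-v|^2}$, which folds the polynomial correction $Q_m$ into a single off-diagonal Gaussian bound (at the cost of degrading the exponent from $\tfrac12$ to $\tfrac18$), and then runs a H\"older/Schur argument with the kernel $(|z-v|+1)e^{-\frac q8|z-v|^2}$, treating $p=1$ (via Fubini) and $p=\infty$ separately. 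You instead use the exact identity $|K_z^m(v)|\,|z|^m|v|^m=m!\,|e^{\bar z v}-Q_m(\bar z v)|$ and split the kernel by the triangle inequality into the pure exponential part, where the cancellation $\operatorname{Re}(\bar z v)-\tfrac12|z|^2-\tfrac12|v|^2=-\tfrac12|z-v|^2$ yields a genuine convolution kernel handled uniformly in $p$ by Young's inequality, plus the polynomial remainder, which you dominate by a finite sum of degenerate (product) kernels $a_k(z)a_k(v)$ with $a_k$ in every $L^s$. Your version is more self-contained (no appeal to the Cho--Zhu inequality), is uniform in $p$ with no endpoint case analysis, retains the sharp Gaussian constant $\tfrac12$, and incidentally sidesteps a small blemish in the paper's proof, where $(1+|z||v|)^{-m}$ is replaced by $(|z||v|)^{-m}$, which is singular at the origin. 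What the paper's approach buys is brevity and reuse of a lemma already needed elsewhere; what yours buys is an elementary, transparent treatment of the $Q_m$ correction, and your closing remark correctly identifies why the naive bound $|e^{\bar z v}-Q_m(\bar z v)|\le e^{|z||v|}$ would not suffice.
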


\begin{proof} 
Let $ h \in \mathscr{F}^{p,m} $ and $1 < p <\infty  $. Since $ g \in \mathcal{BO} $, therefore, by lemma \ref{lem1}, we obtain that 
\begin{align*}
|H_g(h)(z)|^p  &\leq \left \{\omega_{p,m} \int_{\mathbb{C}} |g(z) - g(v))||h(v)| |\frac{e^{z \bar{v}} - Q_m(z \bar{v})}{(z \bar{v})^m }| e^{-|v|^2} |v|^{2m} dA(v)\right \}^p\\
& \leq C \left \{ \omega_{p,m} \int_{\mathbb{C}} (|z-v|+1)|h(v)| |\frac{e^{z \bar{v}} - Q_m(z \bar{v})}{(z \bar{v})^m }| e^{-|v|^2} |v|^{2m} dA(v) \right \}^p.
\end{align*}
 By Using (see \cite{chofock})\[ |\frac{e^{z \bar{v}} - Q_m(z \bar{v})}{(z \bar{v})^m }| \lesssim \frac{e^{\frac{1}{2}|z|^2 + \frac{1}{2} |v|^2 - \frac{1}{8}|z-v|^2}}{(1 + |z||v|)^m} \leq \frac{e^{\frac{1}{2}|z|^2 + \frac{1}{2} |v|^2 - \frac{1}{8}|z-v|^2}}{ (|z||v|)^m},\]
  it follows that
\begin{align*}
&|H_g(h)(z)|^p e^{-\frac{p}{2}|z|^2} |z|^{pm} \\
& \leq C  e^{-\frac{p}{2}|z|^2} |z|^{pm} \left \{ \omega_{p,m} \int_{\mathbb{C}} (|z-v|+1)|h(v)| |\frac{e^{z \bar{v}} - Q_m(z \bar{v})}{(z \bar{v})^m }| e^{-|v|^2} |v|^{2m} dA(v) \right \}^p\\
& \leq C  \left \{ \omega_{p,m} \int_{\mathbb{C}} (|z-v|+1)|h(v)| e^{-\frac{1}{2}|v|^2} e^{- \frac{1}{8}|z-v|^2} |v|^{m} dA(v) \right \}^p\\
& \leq C  \left \{ \omega_{p,m} \int_{\mathbb{C}} |h(v)|^p e^{-\frac{p}{2}|v|^2} |v|^{pm} dA(v) \right \}  \left \{ \omega_{p,m} \int_{\mathbb{C}} (|z-v|+1)^q  e^{- \frac{q}{8}|z-v|^2} dA(v) \right \}^\frac{p}{q}\\
& = C \| h\|_{p,m}^p  \left \{ \omega_{p,m} \int_{\mathbb{C}} (|z-v|+1)^q  e^{- \frac{q}{8}|z-v|^2} dA(v) \right \}^\frac{p}{q},
\end{align*} for some constant $ C >0$. Therefore, $\| H_f(h) \|_{p,m}^p \leq C_0 \| h\|_{p,m}^p $ and hence, $ H_f $ is bounded on $ \mathscr{F}^{p,m} $ for $1 < p < \infty$.
For $ p=1 $, we can conclude by using Fubini's theorem  that 
\begin{align*}
 \| H_g(h)\|_{1,m} & = \omega_{1,m}\int_\mathbb{C} |H_g(h)(z)| e^{-\frac{1}{2}|z|^2} |z|^{m} dA(z)\\
 & \leq  C _0 \left \{  \int_{\mathbb{C}} |h(v)| e^{-\frac{1}{2}|v|^2} |v|^{m} dA(v) \right \}  \left \{  \int_{\mathbb{C}} (|z-v|+1)  e^{- \frac{1}{8}|z-v|^2} dA(z) \right \} \\
 & \leq C_0 \| h\|_{1,m},  
\end{align*}where $ C_0 $ is a constant. For $ p = \infty $, 
\begin{align*}
\| H_g(h)\|_{\infty ,m} & = |H_g(h)(z)| e^{-\frac{1}{2}|z|^2} |z|^{m}\\
& \leq \| h\|_{\infty,m} \left \{  \int_{\mathbb{C}} (|z-v|+1)  e^{- \frac{1}{8}|z-v|^2} dA(z) \right \} \leq C_1,
\end{align*} where $ C_1 >0 $ is a constant and hence, the result follows for all $ 1 \leq p \leq \infty $.
\end{proof}

\begin{theorem} \label{thm2} Let $ g \in \mathcal{BMO}_r^p $. Then the operators $ H_g^p $ and $ H_{\bar{g}}^p $ are bounded for all $ 1 \leq p < \infty $.
\end{theorem}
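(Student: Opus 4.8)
The plan is to reduce the statement to the two special cases already settled in Lemmas~\ref{lem8} and \ref{lem9}, by invoking the decomposition $\mathcal{BMO}_r^p\subset\mathcal{BO}+\mathcal{BA}^p$ of Lemma~\ref{lem1}(6) together with the linearity of the symbol-to-operator correspondence; this is the ``easy'' direction of the Zhu--Per\"al\"a type characterization. First I would record that $g\mapsto H_g^p$ is linear on symbols: since $H_g^p f=(I-P^m)(gf)$ and $P^m$ is linear, $H_{g_1+g_2}^p=H_{g_1}^p+H_{g_2}^p$ on the common (dense) domain spanned by the $k_v^m$. Given $g\in\mathcal{BMO}_r^p$, Lemma~\ref{lem1}(6) (using that $\mathcal{BO}_r$ and $\mathcal{BA}_r^p$ are $r$-independent, as noted after Proposition~\ref{lem4}) yields a splitting $g=g_++g_-$ with $g_+\in\mathcal{BO}$ and $g_-\in\mathcal{BA}^p$. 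Lemma~\ref{lem9} gives boundedness of $H_{g_+}^p$ on $\mathscr{F}^{p,m}$ for every $1\le p\le\infty$, and Lemma~\ref{lem8} gives boundedness of $H_{g_-}^p$ for finite $p\ge1$; adding, $H_g^p=H_{g_+}^p+H_{g_-}^p$ is bounded on $\mathscr{F}^{p,m}$ for $1\le p<\infty$.

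For $H_{\bar g}^p$ the key observation is that all three relevant classes are invariant under $g\mapsto\bar g$. Indeed, the $r$-average of $\bar g$ is the conjugate of $\tilde{g}_r$, so $|\bar g(v)-\overline{\tilde{g}_r(z)}|=|g(v)-\tilde{g}_r(z)|$ pointwise and hence $\|\bar g\|_{\mathcal{BMO}_r^p}=\|g\|_{\mathcal{BMO}_r^p}$; likewise $|\overline{g_+}(z)-\overline{g_+}(v)|=|g_+(z)-g_+(v)|$ shows $\overline{g_+}\in\mathcal{BO}$, and $|\overline{g_-}(v)|=|g_-(v)|$ shows $\overline{g_-}\in\mathcal{BA}^p$. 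Thus $\bar g=\overline{g_+}+\overline{g_-}$ is again an admissible decomposition of a $\mathcal{BMO}_r^p$ function into its $\mathcal{BO}$ and $\mathcal{BA}^p$ parts, and the preceding paragraph applies verbatim to conclude that $H_{\bar g}^p$ is bounded for $1\le p<\infty$.

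The only points requiring a little care — and the closest thing to an obstacle — are bookkeeping ones: one must check that $g_+$ and $g_-$ individually lie in $\Omega_m$, so that $H_{g_\pm}^p$ is densely defined and the linear splitting is legitimate on the span of the $k_v^m$. For $g_+\in\mathcal{BO}$ this is immediate from the growth estimate $|g_+(v)|\le|g_+(0)|+C_0(|v|+1)$ furnished by Lemma~\ref{lem1}(3) against the Gaussian decay of $k_v^m$; for $g_-\in\mathcal{BA}^p$ it follows from Proposition~\ref{lem4}, which identifies membership in $\mathcal{BA}^p$ with boundedness of the multiplication operator $L_{g_-}\colon\mathscr{F}^{p,m}\to L^{p,m}$, so that $g_-k_v^m\in L^{p,m}\subset\bigcup_{p'\ge1}L^{p',m}$ for each $v$. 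With these verifications in place no new estimate beyond those already established is needed, and the proof is complete.
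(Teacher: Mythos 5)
Your proposal is correct and follows essentially the same route as the paper: decompose $g=g_++g_-$ via Lemma~\ref{lem1}(6), apply Lemma~\ref{lem9} to the $\mathcal{BO}$ part and Lemma~\ref{lem8} to the $\mathcal{BA}^p$ part, and use invariance of these classes under conjugation to handle $H_{\bar g}^p$. The paper's proof is a one-line citation of the same three lemmas; your additional verifications (linearity in the symbol, membership of $g_\pm$ in $\Omega_m$) are details the paper leaves implicit.
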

\begin{proof}
By using lemmas \ref{lem1}, \ref{lem8} and \ref{lem9} and the fact that if $ g \in \mathcal{BMO}_r^p $ then so is $ \bar{g} $, the result follows. 
\end{proof}

\section{\bf $\mathcal{VMO}_r^p $ spaces and Compactness of Hankel operators on $ \mathscr{F}^{p,m}$}
Define $ \mathcal{VA}_r $ be the set of all $ L_{Loc}^1 $  integrable functions $ g $ on $ \mathbb{C} $ such that $ \displaystyle \lim_{|z| \rightarrow \infty}\tilde{g}_r   = 0 $.
 For finite $ p \geq 1$, let $ \mathcal{VMO}_r^p $ denote the set of all $ L_{Loc}^p $  integrable functions $ g $ such that 
\[  \displaystyle \lim_{|z| \rightarrow \infty} \left \{ \frac{1}{ \pi r^2} \int_{\mathscr{B}(z; r)} |g(v) - \tilde{g}_r  (z)|^p  dA(v)\right \}^{\frac{1}{p}} =0. \] 
Let $ \mathcal{VO}_r \subset \mathcal{BO}_r $ be the set of all continuous functions $ g $ on $ \mathbb{C} $ such that 
\[  \displaystyle \lim_{|z| \rightarrow \infty} \underset{v \in \mathscr{B}(z; r)}{Sup} | g(v) - g(z) | =0.
\]
Let $ \mathcal{VA}_r^p $ be the set of all $ L_{Loc}^p $  integrable functions $ g $ on $ \mathbb{C} $ such that $ \displaystyle \lim_{|z| \rightarrow \infty}\tilde{g}_r^p   = 0 $.

The following lemma will be useful in the study of compact Hankel operators on $\mathscr{F}^{p,m} $ and the related results.

\begin{lemma} \label{lem10} \cite{wangtoeplitz} Let $ \lambda $ is a positive Borel measure, $ 0 < p < \infty $, $ r > 0 $, $ m $ is a non- negative integer and $ \{ b_n \} $ is the lattice in $ \mathbb{C} $ generated by $ r $ and $ ri$. Then the following conditions are equivalent.
\begin{enumerate}
\item $ 
 \displaystyle \lim_{n \rightarrow \infty} \int_{\mathbb{C}} |g_n (v) v^m e^{- \frac{|v|^2}{2}}|^p d \lambda =0  
$ for all bounded sequence $ \{g_n\} $ in $ \mathscr{F}^{p,m} $ that converges to $ 0 $ uniformly on compact sets;
\item  $ \displaystyle \lim_{|z| \rightarrow \infty} \lambda(\mathscr{B}(z;r)) = 0$;
\item $ \displaystyle \lim_{n \rightarrow \infty} \lambda(\mathscr{B}(b_n;r)) = 0$.
\end{enumerate}
\end{lemma}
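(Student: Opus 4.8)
The plan is to prove the cycle $(1)\Rightarrow(3)\Rightarrow(2)\Rightarrow(1)$ (this is the ``vanishing'' counterpart of Lemma \ref{lem3}), noting that $(2)\Rightarrow(3)$ is immediate since $\{b_n\}\subset\mathbb{C}$; this yields the equivalence of all three. It is convenient throughout to pass to $h=z^m g$, which is entire whenever $g$ is and, by the norm identity recalled in the introduction, satisfies $\|g\|_{p,m}^p=\omega_{p,m}\int_{\mathbb{C}}|h(v)e^{-|v|^2/2}|^p\,dA(v)$, so that the integral in $(1)$ equals $\int_{\mathbb{C}}|h_n(v)e^{-|v|^2/2}|^p\,d\lambda$ with $h_n=z^m g_n$. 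For $(3)\Rightarrow(2)$ I use a covering argument: the disks $\{\mathscr{B}(b_n;r)\}_n$ cover $\mathbb{C}$ (each point lies within $r\sqrt2/2<r$ of a lattice point), and any $\mathscr{B}(z;r)$ is contained in the union of those $\mathscr{B}(b_n;r)$ with $|b_n-z|<2r$, a union of at most a fixed number $N=N(r)$ of disks; since $|z|\to\infty$ forces $|b_n|\ge|z|-2r\to\infty$ for each such $b_n$, we get $\lambda(\mathscr{B}(z;r))\le\sum\lambda(\mathscr{B}(b_n;r))\to 0$.

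For $(1)\Rightarrow(3)$ I test against normalized reproducing kernels: set $g_n=K_{b_n}^m/\|K_{b_n}^m\|_{p,m}$, so that $\|g_n\|_{p,m}=1$ and $g_n\to 0$ uniformly on compact sets because $b_n\to\infty$. The point is the concentration bound $|g_n(v)v^m e^{-|v|^2/2}|^p\ge c$ on $\mathscr{B}(b_n;r)$ with $c=c(p,r,m)>0$ independent of $n$, which follows from the explicit formula for $K^m$ together with the two-sided estimates of Lemma \ref{lem2}. Then
\[
\int_{\mathbb{C}}|g_n(v)v^m e^{-|v|^2/2}|^p\,d\lambda\ \ge\ \int_{\mathscr{B}(b_n;r)}|g_n(v)v^m e^{-|v|^2/2}|^p\,d\lambda\ \ge\ c\,\lambda(\mathscr{B}(b_n;r)),
\]
and $(1)$ forces $\lambda(\mathscr{B}(b_n;r))\to 0$, which is $(3)$.

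The main work is $(2)\Rightarrow(1)$. Fix a bounded sequence $\{g_n\}$ in $\mathscr{F}^{p,m}$, with $S:=\sup_n\|g_n\|_{p,m}^p<\infty$, such that $g_n\to 0$ uniformly on compact sets. The key analytic ingredient is the uniform sub-mean-value inequality
\[
|g(v)v^m e^{-|v|^2/2}|^p\ \le\ \frac{C}{\pi r^2}\int_{\mathscr{B}(v;r)}|g(w)w^m e^{-|w|^2/2}|^p\,dA(w),\qquad C=C(p,r),
\]
valid for every entire $g$ and every $v\in\mathbb{C}$: one applies the sub-mean-value property of the subharmonic function $w\mapsto|h(w)e^{-\bar v w}|^p$ on $\mathscr{B}(v;r)$, where $h=z^m g$, using that there $|e^{-\bar v w}|^p e^{p|w|^2/2}=e^{p(|w-v|^2-|v|^2)/2}\le e^{pr^2/2}e^{-p|v|^2/2}$, while the value at $w=v$ equals $|h(v)e^{-|v|^2}|^p=|g(v)v^m e^{-|v|^2/2}|^p e^{-p|v|^2/2}$. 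Given $\varepsilon>0$, choose by $(2)$ an $R_0$ with $\lambda(\mathscr{B}(w;r))<\varepsilon$ for $|w|>R_0$ and set $R=R_0+r$. On $\{|v|\le R\}$ the integral in $(1)$ is at most $\big(\sup_{|v|\le R}|g_n(v)v^m e^{-|v|^2/2}|^p\big)\lambda(\{|v|\le R\})$, which tends to $0$ as $n\to\infty$ by uniform convergence and local finiteness of $\lambda$; on $\{|v|>R\}$ the sub-mean-value inequality together with Tonelli's theorem gives
\[
\int_{|v|>R}|g_n(v)v^m e^{-|v|^2/2}|^p\,d\lambda\ \le\ \frac{C}{\pi r^2}\int_{\mathbb{C}}|g_n(w)w^m e^{-|w|^2/2}|^p\,\lambda\big(\{v:|v|>R,\,|v-w|<r\}\big)\,dA(w),
\]
and the inner $\lambda$-mass is either $0$ or less than $\varepsilon$ (it is nonzero only when $|w|>R_0$), so the right side is at most $\dfrac{C\varepsilon}{\pi r^2\,\omega_{p,m}}\,\|g_n\|_{p,m}^p\le\dfrac{C\varepsilon S}{\pi r^2\,\omega_{p,m}}$. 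Hence $\limsup_{n\to\infty}\int_{\mathbb{C}}|g_n(v)v^m e^{-|v|^2/2}|^p\,d\lambda\le C'\varepsilon$, and $\varepsilon>0$ being arbitrary, this limit is $0$, i.e.\ $(1)$ holds.

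I expect the main obstacle to be precisely the direction $(2)\Rightarrow(1)$, and within it the uniform sub-mean-value estimate: the constant $C$ must depend only on $p$ and $r$, not on $v$ or $g$, since that uniformity is exactly what lets the Tonelli step produce a bound proportional to $\|g_n\|_{p,m}^p$. Once this estimate is in hand, the covering argument, the kernel-concentration bound, and the compact-part estimate are all routine.
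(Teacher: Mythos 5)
The paper gives no proof of this lemma --- it is quoted verbatim from \cite{wangtoeplitz} --- so there is nothing internal to compare against; judged on its own, your argument is correct and is the standard one for such vanishing Carleson-measure characterizations (lattice covering for $(3)\Rightarrow(2)$, concentration of normalized kernels $K_{b_n}^m/\|K_{b_n}^m\|_{p,m}$ for $(1)\Rightarrow(3)$, and the sub-mean-value inequality for $|z^m g(z)e^{-\bar v z}|^p$ plus Tonelli for $(2)\Rightarrow(1)$), which is in the same spirit as the source. The only point worth flagging is that your compact-part estimate uses local finiteness of $\lambda$, which condition $(2)$ alone does not supply near the origin; this is really a defect of the statement as transcribed here (the paper's own Lemma \ref{lem3} assumes a \emph{locally integrable} positive measure, and the same hypothesis should be read into this lemma), not of your proof.
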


Similar to $ \mathcal{BO}_r $ and $ \mathcal{BA}_r^p $, it is easy to observe that $ \mathcal{VO}_r $ and $ \mathcal{VA}_r^p $ are independent of $r$, so we will denote them by $\mathcal{VO} $ and $ \mathcal{VA}^p $, respectively.

The following results are analogues to theorem \ref{thm1} and lemma \ref{lem1}.

\begin{theorem} \label{thm3} Let $ p $ is any natural number. Then the following conditions are equivalent:
\begin{enumerate}
\item $ g \in \mathcal{VMO}^p $;
\item $ g \in \mathcal{VO} + \mathcal{VA}^p $;
\item  \begin{equation*}
\displaystyle \lim_{|z| \rightarrow \infty} \left \{ \frac{1}{\pi (e^{|z|^2} - Q_m (|z|^2)} \int_{\mathbb{C}} |g(v)- \mathfrak{B_m}g(z)|^p | e^{\bar{z}v} - Q_m(\bar{z}v)|^2 e^{-|v|^2} dA(v)\right \} =0;
\end{equation*} 
\item There exists a constant $ M > 0 $ such that for each $ z \in \mathbb{C}  $, there exists a constant $ \mu_z $ such that 
\begin{equation*}
\displaystyle \lim_{|z| \rightarrow \infty} \left \{ \frac{1}{ \pi r^2} \int_{\mathscr{B}(z; r)} |g(v) - \mu_z|^p  dA(v)\right \} =0;
\end{equation*} 
\item There exists a constant $ M > 0 $ such that for each $ z \in \mathbb{C}  $, there exists a constant $ \mu_z $ such that 
 \begin{equation*}
\displaystyle \lim_{|z| \rightarrow \infty} \left \{ \frac{1}{\pi (e^{|z|^2} - Q_m (|z|^2)} \int_{\mathbb{C}} |g(v)- \mu_z |^p | e^{\bar{z}v} - Q_m(\bar{z}v)|^2 e^{-|v|^2} dA(v) \right \} =0.
\end{equation*}
\end{enumerate}
\end{theorem}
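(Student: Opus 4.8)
The plan is to establish the chain of equivalences by combining the "bounded-version" machinery already developed in Section 2 (Theorem \ref{thm1}, Lemma \ref{lem1}, Proposition \ref{lem4}, Lemma \ref{lem5}, Lemma \ref{lem6}, Proposition \ref{lem7}) with the vanishing analogue of Lemma \ref{lem3}, namely Lemma \ref{lem10}, which is the replacement of a uniform bound by a limit as $|z|\to\infty$. First I would prove $(1)\Leftrightarrow(2)$: the decomposition $\mathcal{BMO}_r^p\subset\mathcal{BO}+\mathcal{BA}^p$ from Lemma \ref{lem1} has a vanishing counterpart. Given $g\in\mathcal{VMO}^p$, one writes $g=\tilde g_r+(g-\tilde g_r)$; the argument of Lemma \ref{lem1}(4)--(5) shows $\tilde g_r\in\mathcal{BO}$ and $g-\tilde g_r\in\mathcal{BA}^p$, and one checks that the mean-oscillation hypothesis $\lim_{|z|\to\infty}(\cdots)^{1/p}=0$ forces $\tilde g_r\in\mathcal{VO}$ and $g-\tilde g_r\in\mathcal{VA}^p$ via Lemma \ref{lem10} applied to $d\lambda(v)=|g(v)-\tilde g_r(z)|^p\,dA(v)$ localized over $\mathscr{B}(z;r)$. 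Conversely, if $g=g_++g_-$ with $g_+\in\mathcal{VO}$, $g_-\in\mathcal{VA}^p$, the triangle inequality in $L^p(\mathscr{B}(z;r))$ together with $|g_+(v)-\widetilde{(g_+)}_r(z)|\le\sup_{\mathscr{B}(z;r)}|g_+(v)-g_+(z)|+|g_+(z)-\widetilde{(g_+)}_r(z)|\to0$ gives $g\in\mathcal{VMO}^p$.

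Next I would handle $(1)\Leftrightarrow(4)\Leftrightarrow(5)$ and $(3)$ by running the \emph{same} proofs as Theorem \ref{thm1}, Lemma \ref{lem5} and Lemma \ref{lem6} but tracking the quantity inside the supremum as $|z|\to\infty$ instead of merely bounding it. Concretely: $(4)\Rightarrow(1)$ is immediate since $\|g\|_{\mathcal{BMO}_r^p}$-type averages are dominated by $\mu_z=\tilde g_r(z)$ averages up to the constant in Lemma \ref{lem1}(1), and the $|z|\to\infty$ limit is preserved. For $(1)\Rightarrow(3)$, I would mimic Lemma \ref{lem5}: decompose $g=g_++g_-$ as above; for the $\mathcal{VA}^p$-part $g_-$, Proposition \ref{lem4} combined with Lemma \ref{lem10} gives $\mathfrak{B_m}|g_-|^p(z)\to0$ and hence (via the computation of Lemma \ref{lem5} estimating $|\mathfrak{B_m}g_-(z)|$ by Hölder) the whole integral $\to0$; for the $\mathcal{VO}$-part $g_+$, the translation-to-origin computation in Lemma \ref{lem5} bounds the integral by $C\iint(|u-v|+1)^p e^{-|u|^2}e^{-|v|^2}\,dA(u)\,dA(v)$ \emph{times} a factor that is actually $\sup_{\mathscr{B}(z;r)}|g_+(z-v)-g_+(z-u)|^p$-controlled, and since $g_+\in\mathcal{VO}$ this local oscillation tends to $0$ as $|z|\to\infty$; hence (3) holds. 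For $(3)\Rightarrow(4)$ I would use the inequality \eqref{eq1} from the proof of Lemma \ref{lem6} with $\mu_z=\mathfrak{B_m}g(z)$: it shows the $\mathscr{B}(z;r)$-average is $\le (aC)^{-1}$ times the Berezin-type integral in (3), so the limit transfers. Finally $(4)\Leftrightarrow(5)$ is exactly the argument "following the proof of Lemma 3.1 of \cite{peralahankel}" already invoked for Theorem \ref{thm1}, now with $\lim$ in place of $\sup$.

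The main obstacle I anticipate is \emph{verifying that the $\mathcal{VO}$-part genuinely contributes a vanishing term} in the analogue of Lemma \ref{lem5}: in the bounded case the double integral $\iint(|u-v|+1)^p e^{-|u|^2-|v|^2}\,dA(u)\,dA(v)$ is simply a finite constant, but for the vanishing statement one must extract a factor measuring the oscillation of $g_+$ near $z$ and argue that, because $|g_+(z-v)-g_+(z-u)|$ is small whenever $z-v$ and $z-u$ both lie near $z$ (using $g_+\in\mathcal{VO}$ and the Gaussian localization of $u,v$ near the origin), the limit is $0$; this requires splitting the integration into a region where $|u|,|v|$ are moderate — where the $\mathcal{VO}$-decay kicks in uniformly — and a tail region controlled by the Gaussian. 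A secondary technical point is that all the Section 2 lemmas are stated for $|z|>M$ (large $|z|$), which is harmless here because the $\mathcal{VMO}^p$/$\mathcal{VA}^p$/$\mathcal{VO}$ conditions are themselves conditions at infinity, so the behavior on the compact set $\{|z|\le M\}$ is irrelevant. With these two points settled, the remaining implications are routine applications of Hölder's inequality, Lemma \ref{lem2}, the asymptotic $1-e^{-\bar z v}Q_m(\bar z v)\to1$ on $\mathscr{B}(z;r)$ as $|z|\to\infty$, and Lemma \ref{lem10}.
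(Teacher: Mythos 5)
Your plan is essentially the paper's intended argument: the paper in fact supplies \emph{no} proof of Theorem~\ref{thm3} at all, merely announcing it as an ``analogue'' of Theorem~\ref{thm1} and Lemma~\ref{lem1}, so a written proof consists of exactly the elaboration you describe --- the decomposition $g=\tilde{g}_r+(g-\tilde{g}_r)$, the chain Lemma~\ref{lem5}/Lemma~\ref{lem6}/Proposition~\ref{lem7} with $\sup$ replaced by $\lim_{|z|\to\infty}$, and Lemma~\ref{lem10} in place of Lemma~\ref{lem3} where a Carleson-type measure condition is needed. Crucially, you correctly isolate the one step that is \emph{not} a mechanical substitution, namely that the $\mathcal{VO}$-part must contribute a vanishing rather than merely bounded term to the Berezin-type integral in condition (3); in the bounded case that term is simply majorized by the finite constant $\iint(|u-v|+1)^p e^{-|u|^2-|v|^2}\,dA(u)\,dA(v)$, and your proposed remedy (use Lemma~\ref{lem11}(5) to get $|g_+(z-v)-g_+(z-u)|\le\epsilon(1+|u-v|)$ once $z-u$ and $z-v$ leave a large disk, split off the Gaussian tail, and pass to the limit) is the standard and correct way to extract the decay. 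One minor inaccuracy: deducing $g-\tilde{g}_r\in\mathcal{VA}^p$ from $g\in\mathcal{VMO}^p$ does not call for Lemma~\ref{lem10}, which concerns integrals of entire functions against a measure; it follows elementarily from $|g(v)-\tilde{g}_r(v)|\le|g(v)-\tilde{g}_r(z)|+|\tilde{g}_r(z)-\tilde{g}_r(v)|$ on $\mathscr{B}(z;r)$ together with the oscillation control on $\tilde{g}_r$, exactly as in Lemma~\ref{lem1}(5); Lemma~\ref{lem10} is the right tool later, for the compactness of $L_g$ in Theorem~\ref{thm4}, not here.
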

 From theorem \ref{thm3}, it follows that $ \mathcal{VMO}_r^p $ is independent of $ r $, so we will write $ \mathcal{VMO}^p $.

\begin{lemma}  \label{lem11}  
\begin{enumerate} 
 \item If $ g \in \mathcal{VMO}^p $, then $ \tilde{g}_r \in \mathcal{VO} $.
\item If $ g \in \mathcal{VMO}^p $, then $ g - \tilde{g}_r \in \mathcal{VA}^p $ for every $ r > 0$.
\item If $ g \in \mathcal{VMO}^p $, then $\mathfrak{B_m}(g) \in \mathcal{VO}$.
\item If $ g  \in \mathcal{VMO}^p $, then $ g - \mathfrak{B_m}(g) \in \mathcal{VA}^p$.
\item  The function $ g \in \mathcal{VO} $ if and only if for each constant $ C > 0 $, there exists $ r > 0 $ such that $ | g(z) - g(v) | \leq C (1 + | z-v |) $ for all $ z, v \in \mathbb{C} \backslash \mathscr{B}(0;r)$ (see \cite{bauermean}).
\end{enumerate}
\end{lemma}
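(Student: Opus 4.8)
The plan is to obtain (1)--(4) by repeating the arguments of the bounded case---Lemma~\ref{lem1}, Lemma~\ref{lem6} and Proposition~\ref{lem7}---with each occurrence of ``uniformly bounded for $|z|$ large'' replaced throughout by ``tends to $0$ as $|z|\to\infty$''; this replacement is legitimate because the quantitative inputs are now supplied by Theorem~\ref{thm3} and by Lemma~\ref{lem10}, in place of Lemma~\ref{lem5} and Lemma~\ref{lem3}. Part~(5) is the vanishing analogue of Lemma~\ref{lem1}(3) and is quoted from \cite{bauermean}; only the trivial implication requires a remark.

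For (1), recall that $\mathcal{VMO}^p$ is independent of $r$, so we may regard $g\in\mathcal{VMO}_{2r}^p$. If $v\in\mathscr{B}(z;r)$ then both $\mathscr{B}(v;r)$ and $\mathscr{B}(z;r)$ lie inside $\mathscr{B}(z;2r)$; subtracting the constant $\tilde{g}_{2r}(z)$ inside the two averages defining $\tilde{g}_r(v)-\tilde{g}_r(z)$ and applying H\"older's inequality gives a constant $c$, depending only on $r$ and $p$, with
\[
\sup_{v\in\mathscr{B}(z;r)}\bigl|\tilde{g}_r(v)-\tilde{g}_r(z)\bigr|\ \le\ c\left(\frac{1}{\pi(2r)^2}\int_{\mathscr{B}(z;2r)}\bigl|g(u)-\tilde{g}_{2r}(z)\bigr|^p\,dA(u)\right)^{1/p}.
\]
The right-hand side tends to $0$ as $|z|\to\infty$ because $g\in\mathcal{VMO}_{2r}^p$, and $\tilde{g}_r$ is continuous, so $\tilde{g}_r\in\mathcal{VO}$. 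For (2), for $v\in\mathscr{B}(z;r)$ estimate $|g(v)-\tilde{g}_r(v)|^p\le 2^{p-1}|g(v)-\tilde{g}_r(z)|^p+2^{p-1}|\tilde{g}_r(z)-\tilde{g}_r(v)|^p$ and average over $\mathscr{B}(z;r)$: the average of the first term is $2^{p-1}$ times the defining $\mathcal{VMO}^p$-quantity of $g$ at $z$, and the average of the second term is at most $2^{p-1}\sup_{v\in\mathscr{B}(z;r)}|\tilde{g}_r(z)-\tilde{g}_r(v)|^p$; both tend to $0$ as $|z|\to\infty$, the first since $g\in\mathcal{VMO}^p$ and the second by (1). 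Hence $g-\tilde{g}_r\in\mathcal{VA}^p$.

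For (3) and (4) the crucial step is to show $\lim_{|z|\to\infty}|\mathfrak{B_m}g(z)-\tilde{g}_r(z)|=0$. As in the proof of Proposition~\ref{lem7},
\[
\bigl|\mathfrak{B_m}g(z)-\tilde{g}_r(z)\bigr|\ \le\ \frac{1}{\pi r^2}\int_{\mathscr{B}(z;r)}\bigl|g(v)-\mathfrak{B_m}g(z)\bigr|\,dA(v)\ \le\ \left(\frac{1}{\pi r^2}\int_{\mathscr{B}(z;r)}\bigl|g(v)-\mathfrak{B_m}g(z)\bigr|^p\,dA(v)\right)^{1/p},
\]
and by the inequality \eqref{eq1} appearing in the proof of Lemma~\ref{lem6} the last average is, for all sufficiently large $|z|$, at most a constant multiple of the integral occurring in Theorem~\ref{thm3}(3), which tends to $0$ for $g\in\mathcal{VMO}^p$. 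Thus $h:=\mathfrak{B_m}g-\tilde{g}_r$ is a continuous function with $h(z)\to0$ as $|z|\to\infty$, and any such function belongs both to $\mathcal{VO}$ and to $\mathcal{VA}^p$. Therefore $\mathfrak{B_m}g=h+\tilde{g}_r\in\mathcal{VO}$ by (1), which is (3), while $g-\mathfrak{B_m}g=(g-\tilde{g}_r)-h\in\mathcal{VA}^p$ by (2), which is (4).

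For (5), the implication from the growth estimate to $g\in\mathcal{VO}$ is immediate: given $\varepsilon>0$, choose $r_0>0$ corresponding to $C=\varepsilon/2$; then for $|z|>r_0+1$ and $v\in\mathscr{B}(z;1)$ we have $|v|>r_0$ and $|z-v|\le 1$, so $|g(v)-g(z)|\le \frac{\varepsilon}{2}(1+|z-v|)\le\varepsilon$, whence $g\in\mathcal{VO}_1=\mathcal{VO}$. The converse amounts to joining two points lying outside a large disk about the origin by a polygonal path that also avoids a fixed disk about the origin, of length comparable to their Euclidean distance, and summing the vanishing-oscillation estimate over unit-length steps along it; the routing-around-the-origin estimate is the only delicate point, and it is carried out in \cite{bauermean}. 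That routing estimate is the main obstacle in the whole lemma; the rest is a direct transcription of the bounded-case proofs with limits in place of suprema.
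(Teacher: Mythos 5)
The paper states Lemma~\ref{lem11} without any proof, so there is nothing to compare your argument against; what you have written is essentially the standard adaptation of the bounded-case machinery (Lemma~\ref{lem1}(4)--(5), the inequality \eqref{eq1} from Lemma~\ref{lem6}, and the $|\mathfrak{B_m}g-\tilde{g}_r|$ estimate from Proposition~\ref{lem7}) to the vanishing setting, and it is correct. Parts (1) and (2) are clean: the $\mathscr{B}(z;r)\subset\mathscr{B}(v;r)\cup\mathscr{B}(z;r)\subset\mathscr{B}(z;2r)$ comparison plus H\"older for (1), and the $2^{p-1}$-splitting for (2), are exactly the right computations, and they use nothing beyond the definition of $\mathcal{VMO}_{2r}^p$. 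Parts (3) and (4) correctly reduce to showing $\mathfrak{B_m}g(z)-\tilde{g}_r(z)\to 0$, and the observation that a continuous function vanishing at infinity lies in $\mathcal{VO}\cap\mathcal{VA}^p$, together with closure of $\mathcal{VO}$ and $\mathcal{VA}^p$ under sums, finishes the job. Part (5) is handled as the paper itself handles it, by deferring the path-around-the-origin argument to \cite{bauermean}.

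One point deserves explicit care: in (3)--(4) you invoke Theorem~\ref{thm3}(3), but that theorem is itself stated in the paper without proof, and its implication $(1)\Rightarrow(3)$ is the vanishing analogue of Lemma~\ref{lem5}, whose proof proceeds through the decomposition $g=\tilde{g}_r+(g-\tilde{g}_r)$ with $\tilde{g}_r\in\mathcal{VO}$ and $g-\tilde{g}_r\in\mathcal{VA}^p$ --- i.e.\ through parts (1) and (2) of the present lemma. Your ordering avoids circularity because you prove (1) and (2) first without any appeal to Theorem~\ref{thm3} beyond the $r$-independence of $\mathcal{VMO}_r^p$ (and even that can be sidestepped by stating (1)--(2) for $g\in\mathcal{VMO}_{2r}^p$, exactly as Lemma~\ref{lem1}(4)--(5) does in the bounded case), but you should say explicitly that the implication of Theorem~\ref{thm3} you use is obtained by rerunning the proof of Lemma~\ref{lem5} with the decomposition supplied by your (1) and (2). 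With that remark added, the argument is complete and self-contained modulo the cited result of Bauer.
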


\begin{lemma}\cite{bauermean} \label{lem12} For $ r > 0 $, consider a function $ g: \mathbb{C} \backslash \mathscr{B}(0;r) \rightarrow \mathbb{C} $ with 
\[ | g(z) - g(v) | \leq C (1 + | z-v |) \text{ for all } z, v \in \mathbb{C} \backslash \mathscr{B}(0;r), 
\] where $ C > 0 $ is independent of $ g $. Then there exists  a function $ G $ on $ \mathbb{C} $ such that $ g = G $ on $ \mathbb{C} \backslash \mathscr{B}(0;r) $  and $ |G(z) - G(v) | \leq 2C (1 + | z-v |) $ for all $ z, v \in \mathbb{C} $.
\end{lemma}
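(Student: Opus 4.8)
The plan is to prove Lemma~\ref{lem12} by a McShane-type (inf-convolution) extension performed separately on the real and imaginary parts of $g$. Write $A = \mathbb{C}\setminus\mathscr{B}(0;r) = \{w\in\mathbb{C}: |w|>r\}$, which is nonempty, and introduce the auxiliary quantity $d(z,w) = 1+|z-w|$ for $z\neq w$ and $d(z,z)=0$. A short case analysis shows that $d$ satisfies $d(z,w)\le d(z,\zeta)+d(\zeta,w)$ for all $z,\zeta,w\in\mathbb{C}$ (the only case needing attention is $z,\zeta,w$ pairwise distinct, where it reduces to $1+|z-w|\le(1+|z-\zeta|)+(1+|\zeta-w|)$), so $d$ behaves like a metric; it also satisfies $d(z,w)\le 1+|z-w|$ for \emph{all} $z,w$. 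In this language the hypothesis says exactly that $|\mathrm{Re}\,g(z)-\mathrm{Re}\,g(w)|\le C\,d(z,w)$ and $|\mathrm{Im}\,g(z)-\mathrm{Im}\,g(w)|\le C\,d(z,w)$ for all $z,w\in A$.

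I would then define, for $z\in\mathbb{C}$,
\[ u(z)=\inf_{w\in A}\bigl(\mathrm{Re}\,g(w)+C\,d(z,w)\bigr),\qquad v(z)=\inf_{w\in A}\bigl(\mathrm{Im}\,g(w)+C\,d(z,w)\bigr), \]
and set $G=u+iv$. Three short verifications remain. First, each infimum is a finite real number: fixing $w_0\in A$, the Lipschitz bound on $A$ together with the triangle inequality for $d$ gives $\mathrm{Re}\,g(w)+C\,d(z,w)\ge \mathrm{Re}\,g(w_0)-C\,d(w_0,z)$ for every $w\in A$, while the choice $w=w_0$ gives an upper bound. Second, $u=\mathrm{Re}\,g$ on $A$: for $z_0\in A$, taking $w=z_0$ (and $d(z_0,z_0)=0$) gives $u(z_0)\le\mathrm{Re}\,g(z_0)$, and the bound $|\mathrm{Re}\,g(w)-\mathrm{Re}\,g(z_0)|\le C\,d(z_0,w)$ gives $\mathrm{Re}\,g(w)+C\,d(z_0,w)\ge\mathrm{Re}\,g(z_0)$ for all $w\in A$; hence $u(z_0)=\mathrm{Re}\,g(z_0)$, and similarly $v=\mathrm{Im}\,g$ on $A$, so $G=g$ on $\mathbb{C}\setminus\mathscr{B}(0;r)$. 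Third, $u$ and $v$ are $C$-Lipschitz with respect to $d$ on all of $\mathbb{C}$: for $z,z'\in\mathbb{C}$ and $w\in A$, the triangle inequality gives $\mathrm{Re}\,g(w)+C\,d(z,w)\le\mathrm{Re}\,g(w)+C\,d(z',w)+C\,d(z,z')$, and taking the infimum over $w\in A$ yields $u(z)\le u(z')+C\,d(z,z')$; by symmetry $|u(z)-u(z')|\le C\,d(z,z')$, and likewise for $v$.

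Finally, combining the two coordinates, for all $z,z'\in\mathbb{C}$,
\[ |G(z)-G(z')|\le |u(z)-u(z')|+|v(z)-v(z')|\le 2C\,d(z,z')\le 2C\,(1+|z-z'|), \]
which, together with $G=g$ on $\mathbb{C}\setminus\mathscr{B}(0;r)$, is the assertion of the lemma. I do not anticipate a genuine obstacle: the proof is essentially the classical Lipschitz extension argument, and the only two points needing care are (i) systematically replacing the Euclidean distance by $d$ so that the additive constant $1$ is absorbed by the triangle inequality, and (ii) the factor $2$ in the conclusion, which appears precisely because the extension is built coordinatewise from $\mathrm{Re}\,g$ and $\mathrm{Im}\,g$.
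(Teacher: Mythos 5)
Your argument is correct. The paper itself offers no proof of this lemma --- it is quoted verbatim from Bauer \cite{bauermean} --- so there is nothing internal to compare against; what you have supplied is a complete, self-contained justification. The key observations all check out: $d(z,w)=1+|z-w|$ for $z\neq w$, $d(z,z)=0$ is genuinely a metric (the triangle inequality in the pairwise-distinct case has a whole unit of slack), the hypothesis is exactly $C$-Lipschitz continuity of $\mathrm{Re}\,g$ and $\mathrm{Im}\,g$ with respect to $d$ on $\mathbb{C}\setminus\mathscr{B}(0;r)$, and the McShane inf-convolution then extends each part to a $C$-Lipschitz function on all of $\mathbb{C}$ (your lower bound $\mathrm{Re}\,g(w)+Cd(z,w)\geq \mathrm{Re}\,g(w_0)-Cd(w_0,z)$ is a correct application of the triangle inequality, and the agreement with $g$ on the complement of the disc is verified properly). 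The factor $2$ in the conclusion is recovered exactly as you say, from summing the real and imaginary contributions, and $d(z,z')\leq 1+|z-z'|$ converts the $d$-Lipschitz bound back into the stated form. One could even get the sharper constant $\sqrt{2}\,C$ by using $|G(z)-G(z')|\leq(|u(z)-u(z')|^2+|v(z)-v(z')|^2)^{1/2}$, but the lemma only asks for $2C$, so this is immaterial.
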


\begin{theorem} \label{thm4} Let $ 1 \leq p <\infty $. Then $ g \in \mathcal{VMO}^p $ then the Hankel operators $ H_g^p $ and $ H_{\bar{g}}^p $ are both compact. 
\end{theorem}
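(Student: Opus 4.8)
The plan is to reduce the statement to two "pure" cases via the decomposition $\mathcal{VMO}^p = \mathcal{VO} + \mathcal{VA}^p$ supplied by Theorem \ref{thm3}. Writing $g = g_+ + g_-$ with $g_+ \in \mathcal{VO}$ and $g_- \in \mathcal{VA}^p$, linearity of the map $g \mapsto H_g^p$ gives $H_g^p = H_{g_+}^p + H_{g_-}^p$, so it suffices to prove that $H_g^p$ is compact when $g \in \mathcal{VA}^p$ and when $g \in \mathcal{VO}$ (boundedness of all operators in sight is already guaranteed, since $\mathcal{VMO}^p \subset \mathcal{BMO}_r^p$ and Theorem \ref{thm2} applies). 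Because $\bar g \in \mathcal{VMO}^p$ whenever $g \in \mathcal{VMO}^p$, the assertion for $H_{\bar g}^p$ follows at once from the one for $H_g^p$.

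For $g \in \mathcal{VA}^p$ I would first show that the multiplication operator $L_g \colon \mathscr{F}^{p,m} \to L^{p,m}$ is compact; then $H_g^p = (I - P^m)L_g$ is compact because $P^m$ is bounded on $L^{p,m}$. To prove $L_g$ compact, take a bounded sequence $\{f_n\}$ in $\mathscr{F}^{p,m}$; by a normal-families argument some subsequence converges uniformly on compact subsets of $\mathbb{C}$ to a function $f \in \mathscr{F}^{p,m}$, and after subtracting $f$ we may assume $f_n \to 0$ uniformly on compacta while remaining bounded in $\mathscr{F}^{p,m}$. Applying Lemma \ref{lem10} to the measure $d\lambda(v) = |g(v)|^p\, dA(v)$ --- for which $\lambda(\mathscr{B}(z;r)) = \pi r^2\, \tilde{g}_r^p(z) \to 0$ as $|z| \to \infty$ is exactly the condition $g \in \mathcal{VA}^p$ --- yields $\|L_g f_n\|_{p,m}^p = \omega_{p,m}\int_{\mathbb{C}} |f_n(v) v^m e^{-|v|^2/2}|^p\, d\lambda(v) \to 0$, so $L_g f_n \to 0$ in $L^{p,m}$ and $L_g$ is compact.

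For $g \in \mathcal{VO}$ I would approximate $H_g^p$ in operator norm by compact operators. Fix $\varepsilon > 0$. By Lemma \ref{lem11}(5) there is $r_\varepsilon > 0$ with $|g(z) - g(v)| \le \varepsilon(1 + |z - v|)$ for all $z, v \in \mathbb{C}\setminus\mathscr{B}(0;r_\varepsilon)$, and by Lemma \ref{lem12} there is a function $G_\varepsilon$ on $\mathbb{C}$ with $G_\varepsilon = g$ off $\mathscr{B}(0;r_\varepsilon)$ and $|G_\varepsilon(z) - G_\varepsilon(v)| \le 2\varepsilon(1 + |z-v|)$ for all $z,v$. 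Then $g - G_\varepsilon$ is bounded and supported in $\mathscr{B}(0;r_\varepsilon)$, hence lies in $\mathcal{VA}^p$, so $H_{g - G_\varepsilon}^p$ is compact by the previous case; while $G_\varepsilon$ has at most linear growth and satisfies a $\mathcal{BO}$-type inequality with constant $2\varepsilon$. Rerunning the estimate in the proof of Lemma \ref{lem9} with $G_\varepsilon$ in place of $g$ --- the only property of the symbol used there is $|g(z)-g(v)| \le C_0(|z-v|+1)$, and the resulting operator-norm bound on $H_g^p$ is proportional to $C_0$ (both in the Hölder argument for $1<p<\infty$ and in the Fubini argument for $p=1$) --- gives $\|H_{G_\varepsilon}^p\| \le C\varepsilon$ with $C = C(p)$ independent of $\varepsilon$. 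Thus $H_g^p = H_{G_\varepsilon}^p + H_{g - G_\varepsilon}^p$ lies within distance $C\varepsilon$ of the compact operators; since the compact operators form a norm-closed subspace of $\mathcal{B}(\mathscr{F}^{p,m}, L^{p,m})$ and $\varepsilon$ is arbitrary, $H_g^p$ is compact, which completes the proof.

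The step I expect to be the main obstacle is the $\mathcal{VO}$ case: one must verify that the boundedness estimate of Lemma \ref{lem9} degrades \emph{linearly} in the oscillation constant $C_0$ (so that small oscillation far from the origin forces small operator norm), handle the cutoff symbol $g - G_\varepsilon$ and the extension Lemma \ref{lem12} cleanly, and keep the constants honest down to $p = 1$. No genuinely new idea beyond the proof of Lemma \ref{lem9} is required, but the Hölder/Fubini bookkeeping there has to be redone with the dependence on $C_0$ made explicit; everything else is a routine assembly of the cited lemmas.
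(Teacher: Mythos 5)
Your proposal is correct and follows essentially the same route as the paper: split $g$ via $\mathcal{VMO}^p=\mathcal{VO}+\mathcal{VA}^p$, get compactness of $L_g$ (hence $H_g^p$) from Lemma \ref{lem10} in the $\mathcal{VA}^p$ case, and in the $\mathcal{VO}$ case use Lemmas \ref{lem11} and \ref{lem12} to write $H_g^p$ as a compact operator plus one of norm $O(\varepsilon)$ via the estimate of Lemma \ref{lem9}. Your version is in fact slightly more careful than the paper's (you make the linear dependence on the oscillation constant explicit and justify why $H^p_{g-G_\varepsilon}$ is compact by placing $g-G_\varepsilon$ in $\mathcal{VA}^p$), but the ideas are the same.
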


\begin{proof} Let $ g \in \mathcal{VA}^p $. This gives the positive measure $ d \lambda = |g|^p d A $ satisfying $  \displaystyle \lim_{|z| \rightarrow \infty} \lambda(\mathscr{B}(z;r)) = 0$. So, by lemma \ref{lem10}, the multiplication operator $ L_g : \mathscr{F}^{p,m} \rightarrow L^{p,m} $ is compact and so is $ H_g^p $. 

Let $ g \in \mathcal{VO} $. Let $ \epsilon > 0 $ be arbitrary. Using lemma \ref{lem11} and \ref{lem12}, it follows that there exists a function $ G $ on $ \mathbb{C} $ such that $ g = G $ on $ \mathbb{C} \backslash \mathscr{B}(0;r) $  and $ |G(z) - G(v) | \leq 2\epsilon (1 + | z-v |) $ for all $ z, v \in \mathbb{C} $. Then lemma \ref{lem1} and \ref{lem9} give $ H_G^p $ is bounded with $\| H_G^p\| \leq 2 \epsilon C_0 $ for some constant $ C_0 > 0$. Also, $ H_{g - G}^p $ is compact, since $ g-G $ has compact support, and $ \| H_G^p - H_{g-G}^p \| = \| H_g^p \| \leq 2\epsilon C_0 $. Since $ \epsilon >0 $ is arbitrary, therefore, the Hankel operators $ H_g^p $ is compact. Similarly, it can be proved that the Hankel operators $ H_{\bar{g}}^p $ is compact.
Hence, by using theorem \ref{thm3}, the result follows.
\end{proof}


\subsection*{Acknowledgment}
Support of UGC Research Grant to second author for carrying out the research work is gratefully acknowledged.

\end{document}